\documentclass[reqno,12pt]{article}

\usepackage[a4paper, hdivide={2.54cm,,2.54cm},vdivide={3.48cm,,3.48cm}]{geometry}
\usepackage{amssymb}
\usepackage{amsmath}
\usepackage{amsthm}
\usepackage{enumerate}
\usepackage{amstext}
\usepackage{graphicx}			
\usepackage{hhline}
\usepackage{psfrag}
\usepackage{float}
\usepackage[dvips]{epsfig}
\usepackage{rotating}
\usepackage{varioref}
\usepackage{color}
\usepackage{setspace}

\usepackage[utf8]{inputenc}
\usepackage[T1]{fontenc}

\usepackage{fancyhdr}
\pagestyle{fancy}

\allowdisplaybreaks

\DeclareSymbolFont{msbm}{U}{msb}{m}{n}
\DeclareMathSymbol{\N}{\mathalpha}{msbm}{'116}
\DeclareMathSymbol{\R}{\mathalpha}{msbm}{'122}

\def\Om{\Omega}
\def\Omclo{\overline{\Omega}}
\def\pdi{\partial_i}
\def\pdj{\partial_j}

\def\pdn{\partial_n}
\def\pdx{\partial_x}

\def\pdnvtilde{\partial_{\tilde v_n}}

\def\sup{\textnormal{sup}}

\def\dist{\text{dist}}
\def\N{\mathbb N }

\def\exp{\text{exp}}

\def\limn{\underset{n \to \infty}{\text{lim}}}

\def\cutoff{\eta}
\def\ellip{\lambda}
\def\coeffbound{C}
\newcommand{\head}[1]{\hat{#1}}

\def\ton{\, \textnormal{on} \,}
\def\tif{\, \textnormal{if} \,}
\def\telse{\, \textnormal{else} \,}
\def\Id{\mathbf 1}

% Kurztitel
\def\subtitle{Feynman Formula for smooth bounded domains}

%opening
\title{Smooth Contractive Embeddings and Application to
Feynman Formula for Parabolic Equations on Smooth Bounded Domains
}

\author{Benedict Baur, Florian Conrad, Martin Grothaus}

\newtheoremstyle{theorem}%
{3pt}% Space above
{3pt}% Space below
{\itshape}% Body font
{}%
{\bfseries}% Theorem head font
{}% Punctuation after theorem head
{.5em}% Space after theorem head
{}%

\newtheoremstyle{lemma}%
{3pt}% Space above
{3pt}% Space below
{}% Body font
{}%
{\bfseries}% Theorem head font
{}% Punctuation after theorem head
{.5em}% Space after theorem head
{}%

\newtheoremstyle{definition}%
{3pt}% Space above
{3pt}% Space below
{}% Body font
{}%
{\bfseries}% Theorem head font
{}% Punctuation after theorem head
{.5em}% Space after theorem head
{}%

\newtheoremstyle{remark}%
{3pt}% Space above
{3pt}% Space below
{}% Body font
{}%
{\bfseries}% Theorem head font
{:}% Punctuation after theorem head
{.5em}% Space after theorem head
{}%

\theoremstyle{theorem} % Text italic, Heading fat cat

\newtheorem{theorem}{Theorem}[section] % Beginn der section mit theorem, Bezeichnung Theorem, neuzählen nach section

\theoremstyle{lemma} % Text normal, Heading fett
\newtheorem{lemma}[theorem]{Lemma}

\theoremstyle{definition} % Text normal, heading fett
\newtheorem{definition}[theorem]{Definition}
\newtheorem{assumption}[theorem]{Assumption}

\theoremstyle{remark}
\newtheorem{remark}[theorem]{Remark}

\begin{document}
\lhead{\subtitle }{\leftmark}
\rhead{\thepage }{\thepage}

%\maketitle 
\begin{center}
\begin{LARGE}Smooth Contractive Embeddings and Application to
Feynman Formula for Parabolic Equations on Smooth Bounded Domains\end{LARGE} \\
\begin{small}
 %\subtitle
\end{small}

\vskip 10mm
\begin{center}
\begin{large}Benedict Baur \end{large}   \\
\textit{\begin{small}baur@mathematik.uni-kl.de \\
\end{small} }
\begin{large} Florian Conrad \end{large}   \\
\textit{\begin{small}fconrad@mathematik.uni-kl.de \\
\end{small} }
\begin{large} Martin Grothaus \end{large} \\   
\textit{\begin{small}grothaus@mathematik.uni-kl.de \\
\end{small} }

\bigskip
\begin{small}
\textit{Functional Analysis and Stochastic Analysis Group, \\
Department of Mathematics, \\
University of Kaiserslautern, 67653 Kaiserslautern, Germany \\}
\end{small}

\end{center}

\textbf{Keywords :} Smooth contractive extension operator, elliptic differential operator, Feynman formula, Chernoff theorem.

\end{center}
\begin{abstract}
We prove two assumptions made in an article by Ya.A. Butko, M. Grothaus, O.G. Smolyanov concerning the existence of a strongly continuous operator semigroup solving a Cauchy-Dirichlet problem for an elliptic differential operator in a bounded domain and the existence of a smooth contractive embedding of a core of the generator of the semigroup into the space $C_c^{2,\alpha}(\R^n)$. Based on these assumptions a Feynman formula for the solution of the Cauchy-Dirichlet problem is constructed in the article mentioned above. In this article we show that the assumptions are fulfilled for domains with $C^{4,\alpha}$-smooth boundary and coefficients in $C^{2,\alpha}$.
\end{abstract}

\section{Introduction}
For a second order elliptic differential operator $L$ with Hölder continuous coefficients (see Definition \ref{DefDiffOp}) and a bounded domain $\Om \subset \R^n$, $n \in \N$, with certain assumptions on the boundary $\partial \Om$ we consider the Cauchy-Dirichlet problem:
For $u_0 \in C_0(\Omclo)$ sufficiently smooth find a function $u: [0,\infty) \to (C_0(\Omclo),\Vert \cdot \Vert_{\sup})$ differentiable in $t$ such that
\begin{align}
\frac{\partial u}{\partial t}(t,x) &= Lu(t,x), &t > 0, \, x \in \Om, \nonumber\\
u(0, x)&= u_0 (x), &x \in \overline{\Om}, \label{CDP} \\
u(t, x)&= 0, &t \ge 0, \, x \in \partial \Om \nonumber.
\end{align}
Let $(C_0(\Omclo),\Vert \cdot \Vert_{C_0(\Omclo)})$ be the Banach space of continuous functions vanishing at the boundary endowed with the norm of uniform convergence (also called supremum norm).
We define $(L,D(L))$ on $C_0(\Omclo)$ by:
% Definition of the differential operator
\begin{definition} \label{DefDiffOp} 
$$ Lu := \sum^n_{i,j=1} a_{ij} \pdi \pdj u + \sum^n_{i=1} b_i \pdi u + c u, \quad u \in D(L), $$
$$D(L) = \{ u \in C^{2,\alpha}(\Omclo) \, | \, u = Lu = 0 \ton \partial \Om \} \label{DefDL}.$$
\end{definition}
We assume the coefficients $a_{ij}$,$b_i$,$c$, $1 \le i,j \le n$ to be at least $C^{0,\alpha}(\Omclo)$-smooth and bounded by a constant $0<\coeffbound < \infty$. The matrix $A:=(a_{ij})_{i,j}$ is assumed to be symmetric and uniformly elliptic with ellipticity constant $\ellip>0$. Throughout this paper $\alpha$ denotes an arbitrary but fixed real number with $0 < \alpha < 1$.
Here as usual $C^{2,\alpha}(\Omclo)$ denotes the space of twice Hölder continuously differentiable functions such that the derivatives admit a (Hölder) continuous extension to the boundary. For a boundary point $x_0$, $Lu (x_0)$ is defined using the continuous extensions of the derivatives of $u$ and the coefficients of $L$ to $\partial \Om$. 

% The formula 
A so called \textit{Feynman formula} gives an approximation of the solution of \eqref{CDP} in terms of an iterated sequence of integrals over elementary functions only, see Definition \ref{DefF} below. In particular this formula gives a finite-dimensional approximation to the well-known Feynman-Kac formula, see \cite{ZhJ}:
\begin{equation}\label{FKFPara}
u(t,x)=\mathbb{E}_x \bigg[\exp\bigg(\int_0^t
c(\xi_\tau)d\tau\bigg)u_0(\xi_t )\,\bigg| \,\,  t<\tau_\Om \bigg],
\quad x \in \Om, \, t > 0.
\end{equation}
Here $\mathbb E_x$ denotes the expectation w.r.t the law of the diffusion process with diffusion matrix $a_{ij}$ and drift coefficient $b_i$ starting in $x$, according to \cite[Theo. 3.1]{ZhJ}.

% Hier besser ersetzen durch klassische Feyn-Kac formula for parabolic equations
We now recall the approximation formula from \cite{BGS} and some main steps in its proof to motivate the assumptions which we will prove in this paper.
Define for $u \in D(L)$ 
\begin{multline}
F_t u (x) := \\
\frac{\psi_{s(t)}(x) \, \exp(t c(x))}{\sqrt{a(x)(4 \pi t)^n}} \int_{\R^n} \exp \left( - \frac{\langle A^{-1}(x)(x-y + t b(x)),x-y + t b(x)\rangle }{4t} \right) \hspace{-4pt} Eu(y) dy \\ 
= \hspace{-1pt} \frac{\psi_{s(t)}(x) \, \exp(t v(x))}{\sqrt{a(x)(4 \pi t)^n}} \hspace{-2pt} \int_{\R^n} \hspace{-2pt} \exp \left( - \frac{\langle A^{-1}(x)(x-y),x-y \rangle }{4t} + \frac{1}{2} \langle A^{-1}b(x),x-y \rangle \right) \hspace{-4pt}  Eu(y) dy. \label{DefF}
\end{multline}
Here $\langle \cdot, \cdot \rangle$ denotes the Euclidean scalar product on $\R^n$, $a(x)$ denotes the determinant of $A(x)$ and $v(x):=c(x) - \frac{1}{4} \langle A^{-1}b(x),b(x) \rangle $. 
$\psi_{s(t)}$ is a family of cutoff functions with compact supports in $\Om$, defined in \cite{BGS} before Lemma 4.3.
% \begin{definition} \label{DefPsi}
% Let $s:[0,\infty) \to [0,\infty)$ be a $C^{\infty}$-smooth function which monotonically decreases to $0$ as $t \searrow 0$ such that $s(t) = o(t)$. Define $\Om_{s(t)} := \{ x \in \Om \ | \ \dist(x,\partial \Om) > s(t) \}$. Let \textit{$\psi_{s(t)}$} be a cutoff function for $\Om_{s(t)}$ in $\Om$.
% \end{definition}
Moreover $E$ is a suitable extension operator embedding $D(L)$ into $C^{2,\alpha}_c(\R^n)$, see Assumption \ref{assEmb} below.\\
Note that we have defined $L$ without the factor $\frac{1}{2}$ in front of the second order terms, which leads to a slightly different form of $F_t$ than in \cite{BGS}.\\
% The integral kernel is the Gausskernel with covariant matrix $t A(x)$ translated by the drift vector $B$. This corresponds to a particle with fixed diffusion matrix $A$ and drift vector $B$. The kernel is convoluted with the smooth extension of $u$ on $\R^n$. During the construction of the extension operator it turned out that this operator corresponds to the absorbing of the particle at the boundary. Finally the integral is multiplied with $exp(tC)$ approximating the weighting in the Feynman-Kac formula. Furthermore a suitable cutoff is applied to ensure that the function $F_t u$ has zero boundary values. 
The subindex $c$ denotes that the functions in  $C_c^{2,\alpha}(\Omclo)$ have compact support in $\Omclo$, analogously the subindex $0$ denotes that the functions vanish at the boundary. The analogous notation is used for the spaces $C^{k,\alpha}(\Omclo)$. Using a Taylor expansion of $Eu$ it can be shown that for $u_0 \in C^{2,\alpha}_0(\Omclo)$ with $Lu=0$ on $\partial \Om$: 
\begin{align} F_t u_0 = u_0 + t L u_0 + o(t), \label{Consistency} \end{align} 
with $o(t)$ independent of $x$, see \cite[Lemma 4.1, 4.2 and 4.3]{BGS}. So $F_t$ approximates the solution to the Cauchy-Dirichlet problem $\eqref{CDP}$ for small $t$ and one might ask whether the solution for $t>0$ can be obtained by splitting $[0,t]$ in small time intervals and applying $F_{t/n}$ in each interval, i.e 
$$ u(t) = \limn (F(t/n))^n u_0.$$

A well-known tool to prove convergence is the Chernoff theorem for strongly continuous operator
semigroups, see \cite[Theo. 2.2]{BGS} or \cite[Theo. 5.2]{EN}.
\begin{theorem}[Chernoff theorem]
Let $X$ be a Banach space, $F:[0,\infty)\to{L}(X)$ a continuous
mapping such that $F(0) =Id$ and $\|F(t)\|\le \exp(at)$ for
some $ a\in [0, \infty)$ and all $t \ge 0$. Let $D$ be a linear
subspace of $D(F'(0))$ such that the restriction of the operator
$F'(0)$ to this subspace is closable. Denote by $(\overline{L}, D(\overline{L}))$ the
closure. If $(\overline{L}, D(\overline{L}))$ is the generator of a strongly continuous
semigroup $(T_t)_{t \ge 0}$, then for any $0 \le t_0 < \infty$ the
sequence $\big(\big(F(t/n)\big)^n\big)_{n \in {\mathbb N}}$
converges to $T_t$ as $n\to\infty$ in operator norm, uniformly with
respect to $t\in[0,t_0]$, i.e., $T_t =
\lim_{n\to\infty}\big(F(t/n)\big)^n$
locally uniformly in $[0, \infty)$.
\end{theorem}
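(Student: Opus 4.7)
My plan is to follow the classical three-step proof of Chernoff's theorem --- telescoping, one-step consistency on the core, density extension --- which yields strong convergence uniformly in $t$, and then to upgrade to operator-norm convergence by exploiting the smoothing properties of the target semigroup.

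Fix $t_0 > 0$ and set $M := e^{a t_0}$, giving the uniform bounds $\|F(t/n)^k\|, \|T_s\| \le M$ for all $0 \le k \le n$ and $s, t \in [0, t_0]$. The semigroup property of $(T_t)$ yields the telescoping identity
\begin{equation*}
F(t/n)^n - T_t \;=\; \sum_{k=0}^{n-1} F(t/n)^{n-1-k} \bigl( F(t/n) - T_{t/n} \bigr) T_{k t/n}.
\end{equation*}
For $y \in D$ both $h \mapsto F(h) y$ and $h \mapsto T_h y$ are differentiable at $0$ with common derivative $F'(0) y = \overline L y$ --- the first by the definition of $F'(0)$ and the closability hypothesis, the second because $\overline L$ generates $(T_t)$ --- so $\|(F(h) - T_h) y\| = o(h)$ for $y \in D$. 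Inserted into the telescope at $y = T_{k t/n} x$ for $x \in D(\overline L)$ (uniformity in $k$ from continuity of $s \mapsto \overline L T_s x$), one obtains $\|(F(t/n)^n - T_t) x\| \to 0$ uniformly in $t \in [0, t_0]$ for each $x \in D(\overline L)$; the uniform bound $2M$ together with density of $D(\overline L)$ in $X$ extends this via an $\epsilon/3$ argument to strong convergence for every $x \in X$.

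The upgrade from strong to operator-norm convergence is the main obstacle, since the abstract Chernoff hypotheses in themselves yield only strong convergence --- indeed, bounded $+$ strongly convergent $+$ uniformly compact does not in general imply norm convergence, so genuine extra structure is required. This extra structure is present in the paper's application: the elliptic semigroup $(T_t)$ is compact for every $t > 0$ and admits the smoothing estimate $\|T_s\|_{X \to D(\overline L)} \le C/s$ as $s \to 0^+$. Combined with a Hölder-type refinement of the one-step consistency, $\|(F(h) - T_h) y\| \le Ch^{1+\alpha} \|y\|_{D(\overline L)}$, obtainable from the $C^{2,\alpha}$-regularity of the coefficients together with the smoothness of the extension operator $E$, the regularising factor $T_{k t/n}$ in each telescopic summand (for $k \ge 1$) promotes the pointwise $o(h)$ bound to an operator-norm bound of the form $\|(F(t/n) - T_{t/n}) T_{k t/n}\|_{L(X)} \le C (t/n)^\alpha / k$. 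Summing over $k$, and handling the $k = 0$ term by rewriting $T_t = T_{t - t/n} T_{t/n}$ so that a smoothing factor is always present, yields $\|F(t/n)^n - T_t\|_{L(X)} = O((t/n)^\alpha \log n) \to 0$, uniformly in $t \in [0, t_0]$.
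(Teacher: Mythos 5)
This statement is not proved in the paper at all: it is quoted as a known result and used as a black box, with the proof delegated to \cite[Theo.~2.2]{BGS} and \cite[Theo.~5.2]{EN}. So there is no in-paper argument to compare against; what I can do is assess your proposal on its own terms, and there are two genuine gaps. First, in the telescoping step you apply the one-step consistency $\|(F(h)-T_h)y\|=o(h)$ at the points $y=T_{kt/n}x$. But that consistency is only available for $y$ in $D$ (or at best in $D(F'(0))$), and nothing in the hypotheses makes $D$ invariant under $(T_t)_{t\ge 0}$: for $x\in D$ the vector $T_s x$ lies only in $D(\overline{L})$, where no derivative of $h\mapsto F(h)y$ at $0$ is assumed to exist. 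Even granting membership, the required uniformity of the $o(h)$ over the compact orbit $\{T_s x: s\in[0,t_0]\}$ does not follow from pointwise differentiability plus continuity of $s\mapsto\overline{L}T_s x$; the family $h^{-1}(F(h)-I)$ is not uniformly bounded on $X$, so no equicontinuity is available. This is precisely why the standard proof in \cite[Theo.~5.2]{EN} takes a different route (the estimate $\|e^{n(T-I)}x-T^nx\|\le\sqrt{n}\,\|Tx-x\|$ for contractions combined with a Trotter--Kato/resolvent core argument) rather than the naive telescope.

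Second, and more fundamentally, your upgrade from strong to operator-norm convergence abandons the theorem you set out to prove. The statement is for an arbitrary Banach space $X$ and an arbitrary $F$ satisfying the listed hypotheses; compactness of $T_t$, the smoothing bound $\|T_s\|_{X\to D(\overline{L})}\le C/s$, and the H\"older-refined consistency $\|(F(h)-T_h)y\|\le Ch^{1+\alpha}\|y\|_{D(\overline{L})}$ are all extraneous assumptions imported from the paper's application, and the last of these is itself unproved (it would require $T_s$ to regularize into a space on which $F(h)$ has second-order Taylor control, with a blow-up rate compatible with your $1/k$ summation, none of which you establish). As written, your argument proves neither the abstract theorem as stated nor a cleanly formulated application-specific variant. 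If the goal is to justify the statement as the paper uses it, the honest course is either to cite \cite{EN} for strong convergence locally uniform in $t$ (which is what the classical Chernoff theorem delivers and what the application actually needs), or to state explicitly the additional structural hypotheses under which operator-norm convergence can be extracted, and prove the refined consistency estimate rather than asserting it.
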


% Chernoff rein oder nicht rein ?
To ensure that $(F_t)_{t \ge 0}$ defined above is uniformly exponentially bounded the extension operator $E$ is assumed to be contractive w.r.t the sup norm.
Note that an estimate of the form $\Vert F_t \Vert \le M \exp(a t)$ for $M>1$ is not sufficient to apply the Chernoff theorem. So boundedness of $E$ would not be sufficient.
This leads to the following assumption:
\begin{assumption} \label{assEmb}
There exists a linear embedding $E: D(L) \to C^{2,\alpha}_c(\R^n)$ with the properties:
\begin{enumerate}
\item $Eu |_{\Omclo} = u$
\item $\underset{x \in \R^n}{\sup} |Eu(x)|  = \underset{x \in \Om}{\sup} |u(x)|$.
\end{enumerate}
\end{assumption}
Here $C_c^{2,\alpha}(\R^n)$ denotes the space of twice hölder continuously differentiable functions with compact support in $\R^n$.

Moreover to apply the Chernoff theorem the solution to $\eqref{CDP}$ must be represented by a strongly continuous operator semigroup and \eqref{Consistency} must hold on a core of the generator of the semigroup. This leads to the following assumption:
\begin{assumption} \label{assCore}
Let $(L,D(L))$ be as in Definition \ref{DefDiffOp}. Assume that $(L,D(L))$ is closable in $(C_0(\Omclo),\Vert \cdot \Vert_{C_0(\Omclo)})$ and the closure $(\overline{L},D(\overline{L}))$ generates a strongly continuous operator semigroup. 
\end{assumption}
Note that then $D(L)$ is a core of the generator of the operator semigroup for the Cauchy-Dirichlet problem \eqref{CDP}. This assumption corresponds to \cite[Ass.~3.2]{BGS}.\\
% Assumption 2 Core

% Assumption 1 Continuation
% Property 1 means that the functions are extended beyond the boundary in a smooth way, since $E$ is required to map into $C^{2,\alpha}_c(\R^n)$. Property 2 means that the embedding is contractive (w.r.t the sup Norm).
% \begin{remark}
% This assumption is slightly weaker than the assumption [\cite{BGS},Ass 3.1], but for the proof of the convergence of the formula it is sufficient to construct the embedding operator just on $D(L)$ and not on $C_0^{2,\alpha}$, see \ref{RemEmb}.
% \end{remark}

Assuming \ref{assCore} and \ref{assEmb}, the following theorem is proved in \cite[Theo.~4.5]{BGS}:
\begin{theorem} \label{ConvgFormula}
Let $F(t)$ be as in Definition \ref{DefF} and $(T_t)_{t \ge 0}$ the semigroup generated by $(\overline{L},D(\overline{L}))$ (due to \ref{assCore}).
Then for all $t \ge 0$ it holds
$$ T_t = \limn \big(F(t/n)\big)^n $$
w.r.t the operator norm.
\end{theorem}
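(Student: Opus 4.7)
The plan is to verify the hypotheses of the Chernoff theorem above for $X = (C_0(\Omclo), \Vert \cdot \Vert_{\sup})$, $F = F(\cdot)$ from Definition \ref{DefF}, and the subspace $D = D(L)$, and then to invoke the theorem directly. Four items must be checked: continuity of $t \mapsto F(t)$ together with $F(0) = \Id$, the exponential norm bound $\Vert F(t) \Vert \le \exp(at)$, the identity $F'(0)|_{D(L)} = L$, and closability of $(L, D(L))$ with its closure generating a $C_0$-semigroup.

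The norm bound is the core computation. After the change of variables $z = x - y + t b(x)$, the Gaussian integral in \eqref{DefF} reduces to a convolution of $Eu$ (shifted by $tb(x)$) against the centered Gaussian density associated to the symmetric positive definite matrix $2tA(x)$, yielding the pointwise estimate
$$ |F_t u(x)| \le |\psi_{s(t)}(x)| \cdot \exp(t c(x)) \cdot \Vert Eu \Vert_{\sup}. $$
Using $|\psi_{s(t)}| \le 1$, $c(x) \le \coeffbound$, and the crucial sup-norm preservation $\Vert Eu \Vert_{\sup} = \Vert u \Vert_{\sup}$ from Assumption \ref{assEmb}, this gives $\Vert F_t \Vert \le \exp(at)$ with $a = \coeffbound$. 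Continuity of $F$ on $(0,\infty)$ follows from smoothness in $t$ of $\psi_{s(t)}$, the coefficients, and the Gaussian kernel, combined with dominated convergence. At $t=0$, $F(0)$ is defined to be $\Id$, and strong continuity at zero is first checked on the dense subspace $D(L)$ via the standard heat-kernel identity $\lim_{t\to 0} F_t u = u$ uniformly on $\Omclo$ (using that $Eu$ is continuous with compact support and $\psi_{s(t)} \to 1$ locally), then extended to all of $X$ by the uniform bound just obtained.

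The second ingredient, $F'(0) u = L u$ for $u \in D(L)$, is precisely the consistency relation \eqref{Consistency} which the authors cite from \cite[Lemmas 4.1, 4.2, 4.3]{BGS}. Since the error term $o(t)$ is uniform in $x$, one has $\Vert t^{-1}(F_t u - u) - L u \Vert_{\sup} \to 0$ as $t \to 0$, so $D(L) \subset D(F'(0))$ and the restriction of $F'(0)$ to $D(L)$ equals $L$. Assumption \ref{assCore} then supplies the remaining hypotheses: $(L, D(L))$ is closable in $X$ and its closure $(\overline{L}, D(\overline{L}))$ generates the strongly continuous semigroup $(T_t)_{t \ge 0}$.

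With every hypothesis of the Chernoff theorem fulfilled, the stated operator-norm convergence $T_t = \limn (F(t/n))^n$, locally uniform in $t \in [0,\infty)$, follows immediately from the theorem. The main technical hurdle is the Gaussian norm estimate above: uniform ellipticity with constant $\ellip$ and symmetry of $A(x)$ are needed to justify the change of variables and to ensure that the normalization factor $\sqrt{a(x)(4 \pi t)^n}$ matches the integral of the shifted Gaussian, so that the drift shift by $tb(x)$ contributes no extra exponential growth and the contractivity of $E$ can be used without loss.
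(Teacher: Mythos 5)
Your proposal follows exactly the route the paper takes: the paper does not reprove this theorem but cites \cite[Theo.~4.5]{BGS}, whose proof is precisely the verification of the Chernoff hypotheses you describe --- the Gaussian normalization giving $\Vert F_t\Vert \le \exp(at)$ via the contractivity of $E$ from Assumption \ref{assEmb}, the consistency relation \eqref{Consistency} identifying $F'(0)|_{D(L)} = L$, and Assumption \ref{assCore} supplying closability and generation. Your sketch is correct; the only detail worth making explicit is that $F_t$ is a priori defined only on $D(L)$ (since $E$ is) and must be extended to all of $C_0(\Omclo)$ by density using the uniform bound, which you only touch on implicitly.
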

Note that the proof of \cite[Theo. 4.5]{BGS} is based on \cite[Lemma 4.2]{BGS}, where the existence of a smooth contractive embedding operator for functions in $C_0^{2,\alpha}(\Omclo)$ is assumed. However this embedding operator is applied only to functions in $D(L)$, so the weaker Assumption \ref{assEmb} is also sufficient.

The aim of the present paper is to prove Assumption \ref{assCore} and \ref{assEmb} under conditions on the smoothness of the coefficients of $L$ and the boundary of $\Om$.

In section 2 we prove Assumption \ref{assCore} for the case of $C^{0,\alpha}(\Omclo)$-smooth coefficients and domains $\Om$ which are $C^{2,\alpha}$-smooth and bounded, see Theorem \ref{SemigroupCoreBounded}.

In section 3 we prove Assumption \ref{assEmb} for coefficients in $C^{2,\alpha}(\Omclo)$ and domains $\Om$ which are $C^{4,\alpha}$-smooth and bounded, see Theorem \ref{EmbeddingWholeSpace}.

\begin{remark} For $C^{2,\alpha}$-smooth $\Om$ and $C^{0,\alpha}$-smooth coefficients the semigroup $(T_t)_{t \ge 0}$ is even analytic and the solution $u(t) = T_t u_0$ is in $C_0^{2,\alpha}(\Omclo)$, see Theorem \ref{DomClosureAnalytic}.
Moreover from \cite[Theo.~3.2]{ZhJ} it follows that the semigroup generated by $(\overline{L},D(\overline{L}))$ is represented by the Feynman-Kac formula \eqref{FKFPara}.
The Feynman-Kac formula holds also under weaker conditions on the boundary, but then $C_0(\Omclo)$ has to be replaced by a larger space, see \cite[Theo.~3.3]{ZhJ}.
\end{remark}

\section{Existence and Regularity}
First we state two well-known theorems concerning elliptic differential operators of second order.

\begin{lemma} \label{MaximumPrinciple}
Let $u \in C^2_0(\Omclo)$, $L$ as in Definition $\ref{DefDiffOp}$. If $u$ attains its maximum (minimum) at an interior point $x_0$ of $\Om$ then for $\lambda_0 := \sup_{x \in \Om} c(x)$ it holds:
\begin{align} (Lu -\lambda_0 u)(x_0) \le 0 \, (\ge 0). \label{MaxPrincip} \end{align}
In particular, the operator $L - \lambda_0$ is dissipative on $C_0(\Omclo)$.
\end{lemma}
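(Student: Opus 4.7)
The plan is to carry out the standard interior-extremum calculation and then convert the pointwise inequality into a dissipativity statement via the tangent-functional characterization on the sup-norm space.

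First I would handle the maximum case. Let $x_0 \in \Om$ be an interior maximum of $u$. Since $u \in C^2_0(\Omclo)$ vanishes on $\partial \Om$, we have $u(x_0) \ge u(x) \ge 0$ for some boundary point, hence $u(x_0) \ge 0$. Elementary calculus at an interior maximum gives $\nabla u(x_0)=0$ and $D^2 u(x_0)$ is negative semi-definite, so the first-order drift term drops out:
\begin{equation*}
\sum_{i=1}^n b_i(x_0)\,\pdi u(x_0) = 0.
\end{equation*}
For the principal part I would use the linear-algebra fact that if $P$ is symmetric positive semi-definite and $Q$ symmetric negative semi-definite then $\operatorname{tr}(PQ)\le 0$ (diagonalize $P=S^\top D S$ with $D\ge 0$ and observe $\operatorname{tr}(PQ)=\operatorname{tr}(D\, SQS^\top)$ is a non-negative combination of non-positive diagonal entries). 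Applying this with $P=A(x_0)$ and $Q=D^2u(x_0)$ yields
\begin{equation*}
\sum_{i,j=1}^n a_{ij}(x_0)\,\pdi\pdj u(x_0) \le 0.
\end{equation*}
Finally, since $c(x_0)-\lambda_0\le 0$ by the definition of $\lambda_0$ and $u(x_0)\ge 0$, the zeroth-order contribution satisfies $(c(x_0)-\lambda_0)u(x_0)\le 0$. Summing the three inequalities gives $(Lu-\lambda_0 u)(x_0)\le 0$. The minimum case is entirely symmetric: $u(x_0)\le 0$, $D^2 u(x_0)$ is positive semi-definite, hence each term has the reverse sign and $(Lu-\lambda_0 u)(x_0)\ge 0$.

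For dissipativity on $(C_0(\Omclo),\|\cdot\|_{\sup})$ I would invoke the standard characterization: an operator $B$ with domain $D\subset C_0(\Omclo)$ is dissipative iff for every $u\in D$ there exists $x_0\in\Omclo$ with $|u(x_0)|=\|u\|_{\sup}$ and $\operatorname{sgn}(u(x_0))\,Bu(x_0)\le 0$. Because $u$ vanishes on $\partial\Om$, the extremum of $|u|$ is attained in the interior, so I can apply the inequality \eqref{MaxPrincip} already proved. If $u(x_0)=\|u\|_{\sup}>0$ then $x_0$ is an interior maximum and $(L-\lambda_0)u(x_0)\le 0$; if $u(x_0)=-\|u\|_{\sup}<0$ then $x_0$ is an interior minimum and $(L-\lambda_0)u(x_0)\ge 0$; in both cases the signed product is $\le 0$. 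The degenerate case $u\equiv 0$ is trivial.

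I do not foresee a genuine obstacle; the only place that requires a moment of care is the sign of $\operatorname{tr}(A(x_0)D^2u(x_0))$, since $A$ is positive definite but not required to commute with the Hessian. The diagonalization argument above is the cleanest way to handle it without having to invoke a specific coordinate frame in which $A(x_0)$ becomes the identity.
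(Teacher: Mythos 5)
Your proof is correct and takes essentially the same route as the paper: the paper simply cites the proof of Gilbarg--Trudinger, Theorem 3.1, for the pointwise inequality \eqref{MaxPrincip} (which is precisely the interior second-derivative test plus the trace inequality $\operatorname{tr}(PQ)\le 0$ that you spell out), and then obtains dissipativity from the same tangent functional $v \mapsto \operatorname{sgn}(u(x_0))\,v(x_0)$ at a point where $|u|$ attains its supremum, which lies in the interior because $u$ vanishes on $\partial\Om$. No gaps.
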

\begin{proof}
The proof of $\eqref{MaxPrincip}$ can be found in the proof of \cite[Theo. 3.1]{GilTru}. Let $x_0 \in \Om$ be a point where the supremum of $|u|$ is attained, then for the bounded linear functional $F:C_0(\Omclo) \to \R$, $v \mapsto \textnormal{sgn}(u(x_0)) v(x_0)$ it holds: $F(u) = \Vert u \Vert_{C_0(\Omclo)}$ and by the statement above: $ F((L-\lambda_0)u) = (Lu-\lambda_0 u) (x_0) \le 0$. So $L-\lambda_0$ is dissipative.
\end{proof}

\begin{theorem} \label{EllipSol}
Let $L$ be an elliptic differential operator with coefficients $a_{ij}$,$b_i$,$c$ in $C^{0,\alpha}(\Omclo)$ and $c \le 0$. Let further $\Om$ be a bounded $C^{2,\alpha}$-smooth domain. Then for $f \in C^{0,\alpha}(\Omclo)$ there exists a unique solution $u \in C_0^{2,\alpha}(\Omclo)$ such that:
$$ Lu = f.$$ 
\end{theorem}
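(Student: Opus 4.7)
The plan is to apply classical Schauder theory via the method of continuity. \emph{Uniqueness} follows from the weak maximum principle: if $u_1, u_2 \in C^{2,\alpha}_0(\Omclo)$ both solve $Lu = f$, their difference $w := u_1 - u_2 \in C^{2,\alpha}_0(\Omclo)$ satisfies $Lw = 0$, $w|_{\partial \Om} = 0$, and since $c \le 0$ the argument inside the proof of Lemma \ref{MaximumPrinciple} (at an interior extremum of $w$ the second-order term is $\le 0$ and the first-order term vanishes, so $c(x_0)w(x_0)\ge 0$, forcing $w \le 0$; reversed for the minimum) yields $w \equiv 0$.

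For \emph{existence}, introduce $L_\tau := (1-\tau)\Delta + \tau L$ for $\tau \in [0,1]$. Each $L_\tau$ is uniformly elliptic with ellipticity constant $\min(1,\ellip)>0$, has coefficients in $C^{0,\alpha}(\Omclo)$, and its zeroth-order coefficient $\tau c \le 0$. The global boundary Schauder a priori estimate for second-order elliptic operators on $C^{2,\alpha}$-smooth domains gives
\begin{align*}
\|u\|_{C^{2,\alpha}(\Omclo)} \le C_1 \bigl(\|L_\tau u\|_{C^{0,\alpha}(\Omclo)} + \|u\|_{\sup}\bigr),
\end{align*}
with $C_1$ depending only on $n,\alpha,\ellip,\coeffbound$ and $\Om$, hence independent of $\tau$. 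The uniqueness argument above, applied to $L_\tau$, yields moreover $\|u\|_{\sup} \le C_2 \|L_\tau u\|_{\sup}$ uniformly in $\tau$ for $u \in C^{2,\alpha}_0(\Omclo)$. Combining these, one obtains $\|u\|_{C^{2,\alpha}(\Omclo)} \le C_3 \|L_\tau u\|_{C^{0,\alpha}(\Omclo)}$ with $C_3$ independent of $\tau$. The continuity method then shows that the set $T := \{\tau \in [0,1] \mid L_\tau: C^{2,\alpha}_0(\Omclo) \to C^{0,\alpha}(\Omclo) \text{ is surjective}\}$ is both open (perturbation/Neumann-series argument using the $\tau$-uniform estimate) and closed (compactness: from a uniform $C^{2,\alpha}$-bound on solutions $u_k$ to $L_{\tau_k}u_k = f$ with $\tau_k \to \tau_*$, extract via Arzel\`a--Ascoli a subsequence converging in $C^{2,\alpha'}$ for any $\alpha' < \alpha$ to a solution of $L_{\tau_*}u = f$). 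Since $0 \in T$ because $\Delta u = f$, $u|_{\partial \Om}=0$, is classically solvable in $C^{2,\alpha}_0(\Omclo)$ on a $C^{2,\alpha}$-smooth bounded domain (Perron's method together with boundary Schauder regularity), we conclude $T = [0,1]$, and in particular $1 \in T$.

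The main obstacle is really one of citation rather than new argument: supplying the global up-to-the-boundary Schauder a priori estimate and the $C^{2,\alpha}$-solvability of the Poisson problem with zero boundary data. Both are standard (see, e.g., Gilbarg--Trudinger, Chapter~6, in particular Theorem~6.14). Once these are invoked, everything else reduces to combining them with Lemma \ref{MaximumPrinciple} to obtain the $\tau$-independent $C^0$-bound driving the continuity argument.
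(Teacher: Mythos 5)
Your sketch is correct and is essentially the standard proof of the result the paper invokes: the paper's entire proof is the citation to \cite[Theo.~6.14]{GilTru}, whose proof is exactly the continuity method from $\Delta$ to $L$ combined with the global Schauder a priori estimate and the maximum-principle sup bound that you describe. Since you yourself reduce the remaining ingredients to that same reference, the two approaches coincide.
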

\begin{proof} See \cite[Theo. 6.14]{GilTru}.
\end{proof}

\begin{theorem} \label{SemigroupCoreBounded}
Let $(L,D(L))$ as in Definition \ref{DefDiffOp} with $C^{0,\alpha}(\Omclo)$-smooth coefficients, $\Om$ be a bounded $C^{2,\alpha}$-smooth domain. Then the closure of $(L,D(L))$ in $(C_0(\Omclo),\Vert \cdot \Vert_{C_0(\Omclo)})$ generates a strongly continuous operator semigroup.
\end{theorem}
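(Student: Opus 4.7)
The plan is to apply the Lumer--Phillips theorem to the shifted operator $L - \lambda_0 \cdot \mathrm{id}$, where $\lambda_0 := \sup_{x \in \Omega} c(x)$. Lemma \ref{MaximumPrinciple} already supplies the dissipativity: $L - \lambda_0$ is dissipative on $C_0(\Omclo)$. It therefore suffices to verify (i) that $D(L)$ is dense in $C_0(\Omclo)$, and (ii) a range condition, namely that $\mathrm{Range}(\lambda \cdot \mathrm{id} - (L - \lambda_0))$ is dense in $C_0(\Omclo)$ for some (equivalently, every) $\lambda > 0$.

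Density of $D(L)$ is cheap: every $u \in C_c^\infty(\Omega)$ vanishes near $\partial\Omega$, hence so does $Lu$, so $C_c^\infty(\Omega) \subset D(L)$ and this subspace is well-known to be dense in $C_0(\Omclo)$. For the range condition, fix $\lambda > 0$ and $f \in C_0^{0,\alpha}(\Omclo)$. Consider the operator $L' := L - (\lambda_0 + \lambda)\mathrm{id}$, which is elliptic with the same leading and first-order coefficients but zeroth-order coefficient $c' = c - \lambda_0 - \lambda \le -\lambda < 0$. Theorem \ref{EllipSol} applies and provides a unique $u \in C_0^{2,\alpha}(\Omclo)$ with $L'u = -f$. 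Since $u$ vanishes on $\partial\Omega$ and $f$ vanishes on $\partial\Omega$ by assumption, we have
\begin{equation*}
Lu|_{\partial\Omega} = \bigl(L'u + (\lambda_0+\lambda)u\bigr)\big|_{\partial\Omega} = -f|_{\partial\Omega} + 0 = 0,
\end{equation*}
so $u \in D(L)$. Rearranging $L'u = -f$ gives $(\lambda \cdot \mathrm{id} - (L-\lambda_0))u = f$, hence the range of $\lambda \cdot \mathrm{id} - (L-\lambda_0)$ (restricted to $D(L)$) contains the space $C_0^{0,\alpha}(\Omclo)$. The latter is dense in $C_0(\Omclo)$ by a standard cut-off-and-mollify argument, so the range condition holds.

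The Lumer--Phillips theorem then yields that $L - \lambda_0$ is closable in $C_0(\Omclo)$ and that its closure generates a strongly continuous semigroup of contractions. Shifting back, the closure $(\overline{L}, D(\overline{L}))$ of $(L, D(L))$ itself generates a strongly continuous semigroup $(T_t)_{t\ge 0}$ on $C_0(\Omclo)$, satisfying $\|T_t\| \le \exp(\lambda_0 t)$. The only subtle point is the compatibility of the boundary condition $Lu = 0$ on $\partial\Omega$ contained in the definition of $D(L)$ with the range equation; this is precisely why one must restrict the inhomogeneity $f$ to functions vanishing at $\partial\Omega$ before invoking Theorem \ref{EllipSol}, and it is the one place where the full shape of $D(L)$ (and not merely the Dirichlet condition $u=0$) enters.
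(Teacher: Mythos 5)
Your proposal is correct and follows essentially the same route as the paper: dissipativity of $L-\lambda_0$ via Lemma \ref{MaximumPrinciple}, the range condition via Theorem \ref{EllipSol}, and the Lumer--Phillips theorem. You in fact spell out a detail the paper leaves implicit, namely that the inhomogeneity $f$ must be taken in $C_0^{0,\alpha}(\Omclo)$ so that the solution satisfies $Lu=0$ on $\partial\Om$ and hence lies in $D(L)$.
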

\begin{proof}
Set $\lambda_0 := \sup_{x \in \Omclo} \, c(x)$.
Then the operator $L - \lambda_0$ is dissipative by Lemma \ref{MaximumPrinciple} and densely defined. Thus $(L,D(L))$ is closable. Moreover since for $\lambda > \lambda_0$, it holds $\tilde{c} = c - \lambda < 0$,  Theorem \ref{EllipSol} applies. So the operator $L-\lambda$ has dense range for all $\lambda > \lambda_0$. Thus the closure $(\overline{L},D(\overline{L}))$ of $(L,D(L))$ generates a strongly continuous semigroup.
\end{proof}
\begin{remark} If $c \le 0$ the operator semigroup is contractive, otherwise the growth bound is given by $\exp(\lambda_0 t)$, where $\lambda_0 =\sup_{x \in \Omclo} \, c(x)$ is as in the proof of Theorem \ref{SemigroupCoreBounded}.
\end{remark}

\begin{remark}
By the previous theorem we get that $D(L)$ is a core for the generator of the semigroup corresponding to the Cauchy-Dirichlet problem $\eqref{CDP}$. The elements in the domain of $D(\overline{L})$ need not to be twice continuously differentiable and so solutions obtained by the operator semigroup at first sight need not to be  classical solutions. However the following theorem shows that functions in $D(\overline{L})$ are twice weakly differentiable and $T_t u$ is even in $C^{2,\alpha}(\Omclo)$ for $t > 0$. 
\end{remark}

Using the results of \cite{Lunardi}, we get:
\begin{theorem} \label{DomClosureAnalytic}
In the situation as in Theorem \ref{SemigroupCoreBounded} we have:
\begin{enumerate}
\item For the domain $D(\overline{L})$ it holds: \\ \begin{align}D(\overline{L}) = \big \{ u \in \bigcap_{p\ge1} W^{2,p}_{loc}(\Om) \, | \, Lu \in C(\Omclo), u \in C_0(\Omclo) \big \}. \label{Dpr} \end{align} 
\item The corresponding semigroup $(T_t)_{t \ge 0}$ is the restriction of an analytic semigroup.
\item $T_t u_0 \in C_0^{2,\alpha}(\Omclo)$ for $t>0$ and $u_0 \in C_0(\Omclo)$.
\end{enumerate}
\end{theorem}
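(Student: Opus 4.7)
The plan is to recognise the right-hand side of (\ref{Dpr}) as the domain of the ``maximal'' realization of $L$ in $C_0(\Omclo)$, to invoke the analytic semigroup theory from \cite{Lunardi} for second order elliptic operators with homogeneous Dirichlet data on spaces of continuous functions, and to identify this realization with the closure $(\overline{L}, D(\overline{L}))$ from Theorem \ref{SemigroupCoreBounded}. Concretely, denote by $(\hat L, D(\hat L))$ the operator on $C_0(\Omclo)$ whose domain is the right-hand side of (\ref{Dpr}) and whose action is $\hat L u := Lu$. Under our hypotheses on $\partial\Om$ and on the coefficients, the pertinent theorem in Lunardi's monograph yields that $(\hat L, D(\hat L))$ is sectorial and, after a suitable shift, generates a bounded analytic semigroup on $C_0(\Omclo)$. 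In particular $\hat L$ is closed and $\lambda - \hat L : D(\hat L) \to C_0(\Omclo)$ is a bijection whenever $\operatorname{Re}\lambda$ is sufficiently large.

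Parts (1) and (2) then reduce to showing $\overline{L} = \hat L$. Since $C^{2,\alpha}_0(\Omclo) \subset \bigcap_{p\ge 1} W^{2,p}_{loc}(\Om)$ and $Lu \in C^{0,\alpha}(\Omclo) \subset C(\Omclo)$ for $u \in D(L)$, the inclusion $D(L) \subset D(\hat L)$ with $\hat L|_{D(L)} = L$ is immediate, so closedness of $\hat L$ forces $\overline{L} \subset \hat L$. For the converse, Theorems \ref{SemigroupCoreBounded} and \ref{EllipSol} combined show that for every $\lambda > \lambda_0$ the range of $\lambda - L$ contains $C^{0,\alpha}(\Omclo)$, which is dense in $C_0(\Omclo)$; hence $\lambda - \overline{L}$ is surjective, and being the shift of a generator it is bijective. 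Picking $\lambda$ so that both $\lambda - \overline{L}$ and $\lambda - \hat L$ are bijections on $C_0(\Omclo)$ and combining with $\overline{L} \subset \hat L$ forces $D(\overline{L}) = D(\hat L)$ and $\overline{L} = \hat L$, which proves (1) and, through Step 1, also (2).

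For (3), fix $u_0 \in C_0(\Omclo)$ and $t > 0$. Analyticity yields $v := T_{t/2} u_0 \in D(\overline{L})$, so $f := \overline{L} v$ lies in $C_0(\Omclo)$, and the commutation of $\overline{L}$ with the semigroup on its domain gives $L(T_t u_0) = \overline{L} T_{t/2} v = T_{t/2} f$. The standard analytic-semigroup estimate $\|T_{t/2} f\|_{D_{\hat L}(\theta, \infty)} \le C_\theta (t/2)^{-\theta} \|f\|_{C_0(\Omclo)}$, combined with Lunardi's characterization of the real interpolation spaces $D_{\hat L}(\theta, \infty)$ as (subspaces with vanishing trace of) the little Hölder spaces $h^{2\theta}(\Omclo)$, then gives $L(T_t u_0) = T_{t/2} f \in C^{2\theta}(\Omclo) \subset C^{0,\alpha}(\Omclo)$ upon choosing any $\theta \in (1/2, 1)$. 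Global boundary Schauder regularity (Gilbarg--Trudinger, Theorem 6.6, applied to $u := T_t u_0 \in C_0(\Omclo)$ satisfying $Lu \in C^{0,\alpha}(\Omclo)$ with $u = 0$ on $\partial\Om$) finally upgrades $T_t u_0$ to $C^{2,\alpha}_0(\Omclo)$.

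The main obstacle is the bookkeeping in this last step: locating Lunardi's interpolation-space characterization with the correct zero boundary conditions for $\theta > 1/2$ in our $C_0$-setting, and matching the resulting Hölder exponent with the global Schauder regularity theorem so that no loss of exponent below $\alpha$ occurs. The identification $\overline L = \hat L$ in the middle step is by contrast a clean resolvent argument once both bijectivities are in hand, and part (2) is inherited essentially for free from Lunardi's theorem in Step 1.
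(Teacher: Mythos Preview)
Your treatment of parts (1) and (2) is essentially identical to the paper's: both identify the right-hand side of \eqref{Dpr} as the domain of a realization that, by \cite[Cor.~3.1.21(ii)]{Lunardi}, generates an analytic semigroup on $C_0(\Omclo)$, and then conclude $\overline{L}=\hat L$ from the fact that a generator cannot be a proper restriction of another generator. The paper phrases the last step via Hille--Yosida, you via coincidence of bijective resolvents; these are the same argument.

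For part (3) you take a genuinely different route. The paper simply invokes \cite[Theo.~5.1.13(iv)]{Lunardi} for the homogeneous Cauchy problem with $f=0$, which directly yields $T_t u_0\in C_0^{2,\alpha}(\Omclo)$ for $t>0$. Your bootstrap via interpolation spaces and Schauder theory is more explicit and avoids that particular black box, but it has a flaw in the last step: Gilbarg--Trudinger Theorem~6.6 is an \emph{a priori estimate} assuming $u\in C^{2,\alpha}(\Omclo)$ already; it does not upgrade a function that is only known to lie in $C_0(\Omclo)\cap\bigcap_p W^{2,p}_{loc}(\Om)$. To make your argument work, you should instead argue by uniqueness: from $u:=T_t u_0\in D(\hat L)$ one gets $u\in W^{2,p}(\Om)$ for all $p$ by global $L^p$ elliptic regularity, hence $u\in C^{1,\gamma}(\Omclo)\subset C^{0,\alpha}(\Omclo)$; then with $f:=Lu\in C^{0,\alpha}(\Omclo)$ and $\lambda>\lambda_0$ one has $g:=(L-\lambda)u\in C^{0,\alpha}(\Omclo)$, so Theorem~\ref{EllipSol} produces $w\in C_0^{2,\alpha}(\Omclo)$ with $(L-\lambda)w=g$, and injectivity of $\hat L-\lambda$ on $D(\hat L)$ forces $u=w$. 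With this correction your approach for (3) is sound and arguably more transparent than a bare citation, at the cost of being longer.
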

\begin{proof}
Let $D'$ be the RHS of $\eqref{Dpr}$.
By \cite[Cor. 3.1.21(ii)]{Lunardi} the differential operator $L$ defined on $D'$ generates an analytic semigroup on $C_0(\Omclo)$. Since $D(L) \subset D'$ and $(L,D')$ is closed it follows $D(\overline{L}) \subset D'$. But $(\overline{L},D(\overline{L}))$ generates a semigroup, thus  $D(\overline{L})$ cannot be a proper subset of $D'$ by the Hille--Yosida theorem.
In the notation of \cite[Theo. 5.1.11]{Lunardi} we have $T_t = \exp(t L)$, $u:= \exp(t L)u_0 = T_t u_0$ and $f=0$. Then the last statement follows from \cite[Theo. 5.1.13(iv)]{Lunardi}. %See also \cite[Theo. 3.1.35]{Lunardi} for the characterization of $D(L)$ as an interpolation space.
\end{proof}

\section{Embedding operator}
In this section we construct the contractive smooth embedding of $D(L) \subset C_0^{2,\alpha}(\Omclo)$ into the space $C^{2,\alpha}_c(\R^n)$. We emphasize the requirement, that the supremum norm of the continued function is not increased. Due to this requirement usual extension operators, like in \cite[Sec. 6.9]{GilTru}, are not suitable, since they increase in general the supremum norm. One possibility to continue a function is to do a reflection at the boundary. That is each point outside corresponds to a point inside $\Om$ and the function at the point outside is defined to be the value at the corresponding point inside multiplied by $1$ or $-1$. Such an extension is clearly contractive, however it is not smooth enough in general. For example take $\Om=\R^+$, $u \in C^{2,\alpha}_0(\R^+_0)$. Define $\tilde{u}(x) = -u(-x)$ for $x<0$. Then since $u(0)=0$ and $\pdx \tilde{u}(x) = \pdx u(-x)$, $\tilde{u}$ is a continuously differentiable continuation of $\tilde{u}$. However $\pdx \pdx \tilde{u}(x) = - \pdx \pdx u(-x)$. So $\tilde{u}$ is in general not $C^{2}(\R)$ smooth. On the other hand, if additionally $\pdx \pdx u(0) = 0$, then $\tilde{u}$ is $C^2(\R)$, and for $u \in C^{2,\alpha}(\R^+_0)$ it follows $\tilde{u} \in C^{2,\alpha}(\R)$. Here $\pdx \pdx u(0)$ means the continuous extension of $\pdx \pdx u$ to the boundary. So we get a smooth and contractive continuation if we restrict ourselves to the subspace of $C^{2,\alpha}_0(\R^+_0)$ with the additional condition $\pdx \pdx u(0) = 0$.
As a motivating example we generalize this this construction to functions with boundary condition $a \pdx \pdx u(0) + b \pdx u(0)= u(0)=0$. In this case the reflection has to be replaced by a squeezed reflection, see Theorem \ref{EmbLaplaceHalfspaceDrift}.
Then we give the construction of the embedding operator for $C^{4,\alpha}$-smooth domains and elliptic differential operators with $C^{2,\alpha}(\Omclo)$-smooth coefficients. In this case the reflection has to be done along a certain direction, see Lemma \ref{ReflDirection}. To ensure the $C^{2,\alpha}$-smoothness of the continued function, we need that the direction of reflection depends $C^{2,\alpha}$-smooth on the boundary point. We first construct a local extension in Theorem \ref{LocalEmbedding} and then the global one in Theorem \ref{EmbeddingWholeSpace}.
% The construction is a modification of what the author presented in \cite{Baur}.
\subsection{Half-line}
% \begin{theorem} \label{EmbLaplaceHalfspace}
% Let $L = \pdx \pdx$, $D(L) = \{ u \in C^{2,\alpha}(\Omclo) \, | \, u(0) = Lu(0) = 0 \}$. Then $D(L)$ can be embedded into $C^{2,\alpha}_c(\R^n)$ such that $\Vert E u \Vert_{C_0(\R^n)} = \Vert u \Vert_{C_0(\Omclo)}$.
% \end{theorem}
% 
% \begin{proof}
% Define the extension $Eu$ of $u$ by:
% $$
% Eu (x) =  
% \begin{cases}
%   u(x) \, & \, x \ge 0 \\
%   -u(-x) \, & \, x < 0 \\ 
% \end{cases}.
% $$
% 
% Then: ...
% \end{proof}

\begin{theorem} \label{EmbLaplaceHalfspaceDrift}
Let $L = a \pdx \pdx + b \pdx$ with $a > 0$, $b \in \R$, $D(L) = \{ u \in C^{2,\alpha}(\R^+_0) \, | \, u(0) = Lu(0) = 0, \, u \, \text{is bounded.} \, \}$. Then there exists an $\varepsilon>0$ and an embedding $E: D(L) \to C^{2,\alpha}([-\varepsilon,\infty))$ with $Eu|_{\R^+}=u$ and
\begin{align}\underset{x \in [-\varepsilon,\infty)}{\sup} |Eu(x)| = \underset{x \in [0,\infty)}{\sup} |u(x)|. \label{ContrHalf} \end{align}
\end{theorem}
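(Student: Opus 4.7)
My plan is to mimic the motivating example from the introduction: extend $u$ by composing it with a smooth orientation-reversing map. Concretely, I would set $Eu(x) := u(x)$ for $x \ge 0$ and $Eu(x) := -u(g(x))$ for $x \in [-\varepsilon, 0)$, where $g$ is a smooth map from $[-\varepsilon,0]$ into $[0,\infty)$ with $g(0)=0$ and $g'(0)=-1$. With this ansatz the contractivity property \eqref{ContrHalf} comes for free, since $|Eu(x)|=|u(g(x))|\le \sup_{y\ge 0}|u(y)|$ for $x \in [-\varepsilon,0)$, and $Eu=u$ on $[0,\infty)$.

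The only genuine computation is to force $C^2$-matching at $x=0$, and this is exactly where the boundary condition $Lu(0)=0$ enters. The chain rule gives $Eu'(0^-) = -u'(0)g'(0) = u'(0) = Eu'(0^+)$ automatically. Differentiating once more, $Eu''(0^-) = -u''(0)(g'(0))^2 - u'(0)g''(0) = -u''(0) - u'(0) g''(0)$. To match $Eu''(0^+)=u''(0)$ one would need $g''(0) = -2u''(0)/u'(0)$, which a priori depends on $u$. However, $Lu(0)=au''(0)+bu'(0)=0$ allows me to substitute $u''(0) = -(b/a)u'(0)$, and the required value collapses to the universal constant $g''(0) = 2b/a$. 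Hence the concrete choice $g(x) := -x + (b/a)x^2$ works, and the resulting operator $E$ is linear.

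It then remains to carry out routine checks. For $\varepsilon$ small enough (so that $g'$ stays strictly negative and $g(-\varepsilon)>0$), the polynomial $g$ is a $C^\infty$ diffeomorphism of $[-\varepsilon,0]$ onto $[0,g(-\varepsilon)]\subset[0,\infty)$. On each of $[-\varepsilon,0]$ and $[0,\infty)$ the function $Eu$ is $C^{2,\alpha}$ (on the left piece by composing the $C^{2,\alpha}$ function $u$ with the smooth $g$). At the interface $x=0$ the values and the first two derivatives agree by construction, giving $Eu \in C^2([-\varepsilon,\infty))$; Hölder regularity of $Eu''$ across the origin follows from Hölder continuity on each side combined with continuity at $0$, via $|Eu''(x)-Eu''(y)| \le |Eu''(x)-Eu''(0)| + |Eu''(0)-Eu''(y)| \le 2C|x-y|^{\alpha}$ for $x<0<y$. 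The main obstacle, if one can call it that, is just the algebraic observation in the second paragraph: $Lu(0)=0$ is exactly what turns the otherwise $u$-dependent matching condition into a single universal choice of $g$, which is the ``squeezing'' of the standard reflection alluded to in the introduction; everything else is direct calculation.
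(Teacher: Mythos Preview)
Your proposal is correct and essentially identical to the paper's proof: the same reflection map $g(x)=-x+(b/a)x^2$, the same chain-rule computation showing the second derivatives match precisely because $Lu(0)=0$, and the same observation that H\"older continuity on each side plus continuity at $0$ yields global $C^{2,\alpha}$ regularity. The only cosmetic difference is that the paper multiplies by a cutoff supported in $(-\varepsilon,\infty)$ at the very end, whereas you simply note that $g$ maps $[-\varepsilon,0]$ into $[0,\infty)$ for $\varepsilon$ small; both suffice for the stated codomain $C^{2,\alpha}([-\varepsilon,\infty))$.
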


\begin{proof}
Define for $x<0$ $F(x) = -x + \frac{b}{a} x^2$. Then there exists an $\varepsilon > 0$ such that $F(x) > 0$ for $-\varepsilon < x < 0$.
Define the extension $E_0 u$ of $u$ by:
$$
E_0 u (x) =  
\begin{cases}
  u(x) \, & \, x \ge 0 \\
  -u(F(x)) \, & \, - \varepsilon < x < 0 \\ 
\end{cases}.
$$
By construction condition $\eqref{ContrHalf}$ is fulfilled. 
Moreover for $y <0$ we have:
\begin{align*}
\pdx E_0 u (y) = - \pdx u(F(y))(\pdx F(y)) = - \pdx u(F(y)) \left(-1 + 2\frac{b}{a}y \right),
\end{align*}
\begin{multline*}
\pdx^2 E_0 u (y) = - \pdx^2 u(F(y)) (\pdx F(y))^2 - \pdx u(F(y))(\pdx^2 F(y)) \\ 
= - \pdx^2 u(F(y)) \left(-1 +2 \frac{b}{a} y\right)^2 - \pdx u(F(y))\left(2 \frac{b}{a}\right).
\end{multline*}

Using the continuity of $F$ we get that for $y \nearrow 0$:
$$ \pdx E_0 u(y) \to \pdx u(0). $$
Here $\pdx u(0)$ denotes the continuous extension of $\pdx u$ from $\R^+$ to $0$. The same notation is used for $\pdx \pdx u(0)$. 

For the second derivative we have for $y \nearrow 0$
$$ \pdx^2 E_0 u(y) \to - \pdx^2 u(0) - \pdx u(0) \left(2 \frac{b}{a}\right). $$

Since $u \in D(L)$, we have $Lu(0) = a \pdx^2 u(0) + b \pdx u(0) = 0$. Thus $ -\frac{b}{a} \pdx u(0) = \pdx^2 u(0) $
so
$$ - \pdx^2 u(0) - \pdx u(0) \left(2 \frac{b}{a}\right) = \pdx^2 u(0). $$
Thus also
$$ \pdx^2 E_0 u(y) \to \pdx^2 u(0) \, \text{as} \, y \nearrow 0.$$

By construction $E_0 u$ is twice Hölder continuously differentiable in $\R^+$ and $(-\varepsilon,0)$, moreover by the calculations above the extensions of the interior derivatives in $\R^+$ and $(-\varepsilon,0)$ to $0$ coincide. Thus $E_0 u \in C^{2,\alpha}((-\varepsilon,\infty))$.
Choose now a cutoff $\eta$ for $\R^+_0$ in $(-\varepsilon,\infty)$. Define $Eu(x) := \cutoff(x) Eu$. Then $Eu \in C^{2,\alpha}([-\varepsilon,\infty))$.
\end{proof}

\subsection{General smooth domain}

\begin{definition} \label{DefDiffeomorphism}
A domain $\Om \subset \R^n$ is called \textit{$C^{k,\alpha}$-smooth} ($k \in \N$, $\, 0<\alpha<1$), if there exists for each point $x_0 \in \partial \Om$ a neighborhood $V$ of $x_0$, a neighborhood $U$ of $0$ and a $C^{k,\alpha}$-smooth  diffeomorphism $\psi: U \to V$ such that:
\begin{enumerate}
\item $\psi(U \cap \R^n_0) = (\partial \Om \cap V)$
 \item $\psi(U \cap \R^n_+) = (\Om \cap V).$ 
\end{enumerate}

Here $\R^n_0$ denotes the $n-1$-dimensional hyperplane $\{ x \in \R^n \, | \, x_n=0\}$ and $\R^n_+$ the halfspace $\{ x \in \R^n \, | \, x_n > 0 \}$.
\end{definition}

% For the ease of notation we drop the factor $\frac{1}{2}$ in the differential operator now:
% 
% \begin{definition} \label{DefDiffOp2} 
% $$ L : D(L) \to C_0(\Omclo), $$
% $$ u \mapsto \sum^n_{i,j=1} a_{ij} \pdi \pdj u + \sum^n_{i=1} b_i \pdi u + c u. $$
% $$D(L) = \{ u \in C^{2,\alpha}(\Omclo) \, | \, u = Lu = 0 \ton \partial \Om \}, \label{DefDL}$$.
% \end{definition}

\begin{lemma} \label{TrafoDiffOp}
Let $L$ be a differential operator as in Definition \ref{DefDiffOp} with $C^{2,\alpha}$-smooth coefficients and $\Om$ a $C^{4,\alpha}$-smooth domain. Let $x \in \partial \Om$, $V$ be a neighborhood of $x$ and $v_1,...,v_n :V \to \R^n$ a family of $C^{3,\alpha}$-smooth normalized vectorfields, which are pairwise orthogonal to each other. Then for all points in $V \cap \Omclo$ $L$ can be written in partial derivatives in directions along the vector fields $v_1,...,v_n$ with $C^{2,\alpha}$-smooth coefficients. In particular the first order coefficient of $\partial_{v_n}$ has the form
\begin{align} \tilde{b} := \langle b, v_n \rangle + \sum^n_{l=1}  \langle v_l, A \partial_{v_l} v_n \rangle . \label{EqBtilde} \end{align}
\end{lemma}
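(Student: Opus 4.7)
The plan is to express the operator $L$ in the new frame by a direct change of variables at the level of first-order operators, then collect terms to identify the second-order and first-order coefficients. Since $v_1,\dots,v_n$ form a pointwise orthonormal basis, we have $\partial_i = \sum_l (v_l)_i\,\partial_{v_l}$, where $(v_l)_i = \langle v_l, e_i\rangle$, and reciprocally $\partial_{v_l} = \sum_i (v_l)_i\,\partial_i$. Smoothness of the new coefficients will be automatic: the $v_l$ are $C^{3,\alpha}$, so $\partial_{v_l}(v_m)_j$ is $C^{2,\alpha}$, and combining with $C^{2,\alpha}$-smooth $a_{ij}, b_i$ one only multiplies and adds $C^{2,\alpha}$ functions.

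Next I would compute $\partial_{v_l}\partial_{v_m} u$ acting on a $C^2$ function $u$. Expanding,
\begin{equation*}
\partial_{v_l}\partial_{v_m} u \,=\, \sum_{i,j}(v_l)_i(v_m)_j\,\partial_i\partial_j u \;+\; \sum_j \bigl[\partial_{v_l}(v_m)_j\bigr]\,\partial_j u .
\end{equation*}
Writing $L$ in the target form $L = \sum_{l,m}\tilde a_{lm}\partial_{v_l}\partial_{v_m} + \sum_l \tilde b_l \partial_{v_l} + c$ and matching second-order terms via the relation $\sum_{l,m}\tilde a_{lm}(v_l)_i(v_m)_j = a_{ij}$, pointwise orthonormality gives at once
\begin{equation*}
\tilde a_{lm} \,=\, \langle v_l, A v_m\rangle .
\end{equation*}
For the first-order coefficients, matching coefficients of $\partial_j$ yields $\sum_l \tilde b_l (v_l)_j = b_j - \sum_{l,m}\tilde a_{lm}\,\partial_{v_l}(v_m)_j$; dotting with $v_n$ and using $\sum_j(v_l)_j(v_n)_j=\delta_{ln}$ gives
\begin{equation*}
\tilde b_n \,=\, \langle b,v_n\rangle \;-\; \sum_{l,m}\langle v_l,Av_m\rangle\,\langle \partial_{v_l}v_m, v_n\rangle .
\end{equation*}

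The last step, and the only mildly delicate one, is to turn this into the claimed expression \eqref{EqBtilde}. Differentiating the orthonormality identity $\langle v_m,v_n\rangle=\delta_{mn}$ along $v_l$ gives $\langle \partial_{v_l}v_m,v_n\rangle = -\langle v_m,\partial_{v_l}v_n\rangle$. Substituting and exploiting the symmetry of $A$ together with the orthonormal expansion $A v_l = \sum_m \langle Av_l,v_m\rangle v_m$, the double sum collapses to a single sum:
\begin{equation*}
\sum_{l,m}\langle v_l,Av_m\rangle\,\langle v_m,\partial_{v_l}v_n\rangle \,=\, \sum_l \langle Av_l,\partial_{v_l}v_n\rangle \,=\, \sum_l \langle v_l, A\,\partial_{v_l}v_n\rangle ,
\end{equation*}
which produces exactly the formula \eqref{EqBtilde} for $\tilde b = \tilde b_n$. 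The main obstacle is simply the index bookkeeping in this last rearrangement; once one recognises that the antisymmetry from orthonormality combines with the symmetry of $A$ to turn a double sum over $(l,m)$ into a single sum indexed by $l$, the identity falls out.
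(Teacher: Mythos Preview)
Your argument is correct. Both you and the paper do the same change-of-frame computation, but organised differently: the paper substitutes $\partial_i=\sum_k\langle e_i,v_k\rangle\,\partial_{v_k}$ directly into $L$ and collects, which yields
\[
L=\sum_{k,l}\langle v_k,Av_l\rangle\,\partial_{v_k}\partial_{v_l}+\sum_k\Bigl(\langle b,v_k\rangle+\sum_l\langle v_l,A\,\partial_{v_l}v_k\rangle\Bigr)\partial_{v_k}+c
\]
in one stroke, so the formula \eqref{EqBtilde} appears immediately without any further manipulation. Your route---writing $L$ in the target form, expanding $\partial_{v_l}\partial_{v_m}$ back into standard partials, and matching coefficients---initially produces $\tilde b_n=\langle b,v_n\rangle-\sum_{l,m}\langle v_l,Av_m\rangle\langle\partial_{v_l}v_m,v_n\rangle$, and you then need the extra step of differentiating the orthonormality relation and invoking the symmetry of $A$ to collapse the double sum. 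Both are valid; the paper's forward substitution is a bit shorter because it avoids that last algebraic detour, while your approach makes the role of the orthonormality and the symmetry of $A$ more visible. One minor point: when you match second-order terms, only the symmetric part of $\tilde a_{lm}$ is determined (since $\partial_{v_l}\partial_{v_m}-\partial_{v_m}\partial_{v_l}$ is first order); you are implicitly choosing $\tilde a_{lm}$ symmetric, which is consistent with the paper's expression $\langle v_l,Av_m\rangle$.
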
 
\begin{proof}
Denote by $e_i$ the i-th unit vector.
Since $e_i = \sum_{j=1}^n \langle e_i, v_j \rangle v_j$ we have
\begin{align}
\pdi = \sum_{j=1}^n \langle e_i, v_j \rangle \partial_{v_j} \label{PdiLocal} 
\end{align}
and 
\begin{multline}
\pdi \pdj = \left(\sum_{k=1}^n \langle e_i, v_k \rangle \partial_{v_k}\right) \left(\sum_{l=1}^n \langle e_j, v_l \rangle \partial_{v_l} \right) \\
= \sum_{k=1}^n \sum_{l=1}^n \langle e_i, v_k \rangle \langle e_j, v_l \rangle \partial_{v_k} \partial_{v_l} + \langle e_i, v_k \rangle \langle e_j, \partial_{v_k} v_l \rangle \partial_{v_l}. \label{PdijLocal}
\end{multline}

Plugging \eqref{PdiLocal}, \eqref{PdijLocal} into $L$ yields:

\begin{multline}
L = \sum^n_{i=1} \sum^n_{j=1} a_{ij} \pdi \pdj + \sum^n_{i=1} b_i \pdi + c = 
\\
\sum^n_{i,j,k,l=1} a_{ij} \big(\langle e_i, v_k \rangle \langle e_j, v_l \rangle \partial_{v_k} \partial_{v_l} + \langle e_i, v_k \rangle \langle e_j, \partial_{v_k} v_l \rangle \partial_{v_l} \big) + \sum^n_{i=1} b_i \sum_{k=1}^n \langle e_i, v_k \rangle \partial_{v_k} + c \\
= \sum^n_{k,l} \langle v_k,A v_l \rangle \partial_{v_k} \partial_{v_l} + \sum^n_{k=1} \left (\langle b, v_k \rangle + \sum^n_{l=1} \langle v_l, A \partial_{v_l} v_k \rangle \right) \partial_{v_k} + c 
\end{multline}

Since $v_k \in C^{3,\alpha}(V)$, the second order coefficients $\langle v_k, A v_l \rangle$ and the first order coefficients $\langle b, v_k \rangle + \sum_{l=1}^n \langle v_l, A \partial_{v_l} v_k \rangle$ are $C^{2,\alpha}$-smooth.
\end{proof}
% Observe that the drift coefficient in the local coordinates consists of derivatives of the $C^{3,\alpha}$ vector fields. Since the construction of the embedding operator requires $C^{2,\alpha}$ smooth drift coefficients the domain must be assumed to be $C^{4,\alpha}$. 

Now we construct the direction of reflection.
\begin{lemma} \label{ReflDirection}
Let $L$ be a differential operator as in Definition \ref{DefDiffOp} with $C^{2,\alpha}$-smooth coefficients, $\Om$ a $C^{4,\alpha}$-smooth domain. Then for every point $x_0 \in \partial \Om$ and $V$ the neighborhood of Definition \ref{DefDiffeomorphism} there exist a $C^{2,\alpha}$-smooth vector field $\tilde{v}_n : V \cap \partial \Om \to \R^n$ pointing into $\Om$ and $C^{2,\alpha}$-smooth coefficients $\tilde{a}_{nn}$, $\tilde{b} : V \cap \partial \Om \to \R$ such that for $u \in D(L)$ as in Definition \ref{DefDL} and $x \in V \cap \partial \Om$:
\begin{eqnarray} \tilde{a}_{nn} \pdnvtilde \pdnvtilde u (x) + \tilde{b} \pdnvtilde u (x)= 0. \label{ReflDirectionEq} 
\end{eqnarray}
In particular, it holds $\tilde{a}_{nn} = \langle v_n, A v_n \rangle$ and $\tilde{b}$ is as in $\eqref{EqBtilde}$.
Here $\pdnvtilde u(x)$ and $\pdnvtilde \pdnvtilde u(x)$ are the continuous extension of the inner derivatives to the boundary.
\end{lemma}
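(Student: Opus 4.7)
The plan is to construct $\tilde v_n$ as a specific multiple of $A v_n$, motivated by the algebraic observation that once the Dirichlet condition $u|_{\partial\Omega}=0$ is combined with $Lu|_{\partial\Omega}=0$, the equation $Lu=0$ collapses on the boundary to a scalar identity involving only directional derivatives along $Av_n$. The main obstacle is thus conceptual, namely guessing this direction; the remainder is linear algebra and smoothness bookkeeping that are handled directly by the hypotheses.

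First, I would invoke Lemma~\ref{TrafoDiffOp} with a local orthonormal $C^{3,\alpha}$ frame $v_1,\dots,v_n$ on $V$ (shrinking $V$ if necessary) chosen so that $v_n$ is the unit inward normal on $V\cap\partial\Omega$ and $v_1,\dots,v_{n-1}$ span the tangent space there. Such a frame exists because $\partial\Omega$ is $C^{4,\alpha}$: the inward unit normal extends to a $C^{3,\alpha}$ field on $V$ via (minus) the gradient of the signed distance function, and a complementary tangent frame is obtained locally by Gram--Schmidt. Since $u$ vanishes on $\partial\Omega\cap V$, tangential differentiation yields $\partial_{v_k}u=0$ and $\partial_{v_j}\partial_{v_k}u=0$ there for all $j,k<n$. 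Substituting these into the representation of $L$ from Lemma~\ref{TrafoDiffOp} and using $Lu|_{\partial\Omega}=0$, the identity $Lu=0$ on $V\cap\partial\Omega$ reduces to
$$
\langle v_n,Av_n\rangle\,\partial_{v_n}^2 u+2\sum_{k<n}\langle v_k,Av_n\rangle\,\partial_{v_n}\partial_{v_k}u+\tilde b\,\partial_{v_n}u=0,
$$
with $\tilde b$ precisely the first-order coefficient from \eqref{EqBtilde}. The key observation is that the second-order boundary data of $u$ enter only through the Hessian form $D^2u$ evaluated on the single vector $Av_n$.

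I then set
$$
\tilde v_n:=\frac{Av_n}{\langle v_n,Av_n\rangle},\qquad \tilde a_{nn}:=\langle v_n,Av_n\rangle
$$
on $V\cap\partial\Omega$. Uniform ellipticity gives $\langle v_n,Av_n\rangle\ge\lambda>0$, so both $\tilde v_n$ and $\tilde a_{nn}$ inherit $C^{2,\alpha}$-smoothness from $A\in C^{2,\alpha}(\Omclo)$ and $v_n\in C^{3,\alpha}$, and $\langle\tilde v_n,v_n\rangle=1>0$ shows that $\tilde v_n$ points into $\Omega$. To verify \eqref{ReflDirectionEq}, note that on $V\cap\partial\Omega$ we have $\nabla u=(\partial_{v_n}u)v_n$, so $\partial_{\tilde v_n}u=\partial_{v_n}u$. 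Expanding $\partial_{\tilde v_n}\partial_{\tilde v_n}u=\tilde v_n^\top D^2u\,\tilde v_n$ in the orthonormal basis and dropping the vanishing tangential second derivatives gives
$$
\partial_{\tilde v_n}^2 u=\partial_{v_n}^2 u+\frac{2}{\langle v_n,Av_n\rangle}\sum_{k<n}\langle v_k,Av_n\rangle\,\partial_{v_n}\partial_{v_k}u.
$$
Multiplying by $\tilde a_{nn}$ and adding $\tilde b\,\partial_{\tilde v_n}u=\tilde b\,\partial_{v_n}u$ reproduces precisely the boundary trace of $Lu$ displayed above, which vanishes; this establishes \eqref{ReflDirectionEq} with the stated choices of $\tilde a_{nn}$ and $\tilde b$.
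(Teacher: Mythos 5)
Your proposal is correct and follows essentially the same route as the paper: expanding $Av_n$ in the orthonormal frame shows that your $\tilde v_n = Av_n/\langle v_n, Av_n\rangle$ is exactly the paper's $v_n + \sum_{i<n}\frac{\langle v_i, Av_n\rangle}{\langle v_n, Av_n\rangle}\,v_i$, and your reduction of $Lu=0$ on the boundary via the vanishing tangential derivatives, together with the final verification of \eqref{ReflDirectionEq}, is the same computation as in the paper's proof. The only cosmetic differences are the coordinate-free description of the reflection direction as the normalized conormal and the construction of the frame from the signed distance function rather than by Gram--Schmidt on the columns of $D\psi$.
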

\begin{proof}
Let $x_0 \in \partial \Om$, $V$ be the neighborhood of $x_0$, $U$ be the neighborhood of $0$ and $\psi$ be the  $C^{4,\alpha}$-smooth diffeormophism of Definition \ref{DefDiffeomorphism}. By orthonormalising the column vectors of the Jacobi matrix $D \psi$ we obtain a family of vector fields $v_1,...,v_n : V \to \R^n$. Since $\psi$ maps points $z \in U$ with $z_n > 0$ into $\Om$ the last column vector $(D \psi)_n$ on $\partial \Om$ points into $\Om$ and thus also $v_n$ restricted to $\partial \Om$ points into $\Om$.
Note that $v_1,...,v_n$ fulfill the assumption of $\ref{TrafoDiffOp}$.
 
Define
$$ \tilde{v}_n := v_n + \sum^{n-1}_{i=1} \frac{ \langle v_i, A v_n \rangle}{\langle v_n, A v_n \rangle} v_i. $$
Then $\tilde{v}_n \in C^{3,\alpha}(\partial \Om)$.
Since $u=0$ on the boundary, we have
\begin{align} \partial_{v_i} u & = 0, \label{BondPdi} \\
\partial_{v_i} \partial_{v_j} u & = 0 \, \quad \text{on} \, V \cap \partial \Om \nonumber
\end{align}
for $i,j \ne n$.

Therefore we have
\begin{align}
\partial_{v_n} u & = \pdnvtilde u, \label{FirstPdn} \\
\partial_{v_i} \partial_{v_n} u & = \partial_{v_i} \pdnvtilde u  \, \quad \text{on} \, V \cap \partial \Om \, \nonumber 
\end{align}
for $i \ne n$.
Furthermore:
\begin{multline}
\partial_{v_n} \partial_{v_n} u = \left(\pdnvtilde - \sum^{n-1}_{i=1} \frac{ \langle v_i, A v_n \rangle}{\langle v_n, A v_n \rangle} \partial_{v_i}  \right) \left(\pdnvtilde - \sum^{n-1}_{i=1} \frac{ \langle v_i, A v_n \rangle}{\langle v_n, A v_n \rangle} \partial_{v_i}  \right) u \\
= \pdnvtilde \pdnvtilde u - 2 \sum^{n-1}_{i=1} \frac{ \langle v_i, A v_n \rangle}{\langle v_n, A v_n \rangle} \partial_{v_i} \pdnvtilde u \\
= \pdnvtilde \pdnvtilde u - 2 \sum^{n-1}_{i=1} \frac{ \langle v_i, A v_n \rangle}{\langle v_n, A v_n \rangle} \partial_{v_i} \partial_{v_n} u \quad \text{on} \, V \cap \partial \Om. \label{SecPdn}
\end{multline}

Let $\tilde{b}$ be as in $\eqref{EqBtilde}$.
The boundary condition $Lu = 0$ implies using \eqref{BondPdi}, \eqref{FirstPdn}, \eqref{SecPdn} and Lemma \ref{TrafoDiffOp}:
\begin{multline*}
0 = Lu = \langle v_n, A v_n \rangle \partial_{v_n} \partial_{v_n} u + 2 \sum^{n-1}_{i=1} \langle v_i, A v_n \rangle \partial_{v_i} \partial_{v_n} u + \tilde{b} \partial_{v_n} u \\
= \langle v_n, A v_n \rangle \left(\pdnvtilde \pdnvtilde u - 2 \sum^{n-1}_{i=1} \frac{ \langle v_i, A v_n \rangle}{\langle v_n, A v_n \rangle} \partial_{v_i} \partial_{v_n} u \right) + 2 \sum^{n-1}_{i=1} \langle v_i, A v_n \rangle \partial_{v_i} \partial_{v_n} u + \tilde{b} \pdnvtilde u \\
= \langle v_n,A v_n \rangle \pdnvtilde \pdnvtilde u + \tilde{b} \pdnvtilde u \quad \text{on} \, V \cap \partial \Om.
\end{multline*}

\end{proof}
We construct now a local embedding operator.
For a point $x \in \R^n$ we denote by $x'$ the vector of the first $n-1$ coordinates. 
\begin{theorem} \label{LocDiffeo}
Let $L$ be a differential operator as in Definition \ref{DefDiffOp} with $C^{2,\alpha}$-smooth coefficients, $\Om$ a $C^{4,\alpha}$-smooth domain. Then for each $x_0 \in \partial \Om$, there exists a neighborhood $\tilde{V}$ of $x_0$, a neighborhood $\tilde{U}$ of  $0$ and a $C^{2,\alpha}$-smooth diffeomorphism $\tilde{\psi}$ as in Definition \ref{DefDiffeomorphism} with the additional property:
\begin{align}\pdn (u \circ \tilde{\psi}) (z) = \pdnvtilde u (\tilde{\psi}( z)), \quad z \in U \cap \R^n_0, \ \end{align}
where $\tilde{v}_n : V \to \R^n$ is the vector field provided by Lemma \ref{ReflDirection}.
\end{theorem}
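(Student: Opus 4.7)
I would take the $C^{4,\alpha}$ chart $\psi:U\to V$ provided by Definition~\ref{DefDiffeomorphism} and replace its inward direction along the boundary by the reflection direction $\tilde v_n$ from Lemma~\ref{ReflDirection}, defining
\begin{equation*}
\tilde\psi(z',z_n) \;:=\; \psi(z',0) + z_n\,\tilde v_n\bigl(\psi(z',0)\bigr).
\end{equation*}
Then $\tilde\psi(\cdot,0)=\psi(\cdot,0)$ still parametrizes the boundary piece near $x_0$, and by construction $\partial_n\tilde\psi(z',0)=\tilde v_n(\tilde\psi(z',0))$, which is exactly what is needed to get the claimed chain-rule identity.

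Regularity follows because $\tilde v_n\in C^{2,\alpha}(V\cap\partial\Omega)$ by Lemma~\ref{ReflDirection} and $\psi(\cdot,0)$ is $C^{4,\alpha}$, so the composition $\tilde v_n\circ\psi(\cdot,0)$ is $C^{2,\alpha}$ in $z'$, and consequently $\tilde\psi\in C^{2,\alpha}$. The Jacobian $D\tilde\psi(0)$ has as its first $n-1$ columns the tangential derivatives $\partial_{z_i}\psi(0,0)$, which span $T_{x_0}\partial\Omega$, and as its last column $\tilde v_n(x_0)$, which is transverse to $\partial\Omega$: from $\tilde v_n=v_n+\sum_{i<n}c_iv_i$ in Lemma~\ref{ReflDirection} one sees that $\tilde v_n$ has a nonzero component along the inward-pointing normal $v_n$. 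Hence $D\tilde\psi(0)$ is invertible and the inverse function theorem yields a $C^{2,\alpha}$ diffeomorphism on a small neighborhood $\tilde U$ of $0$.

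After possibly shrinking $\tilde U$, the transversality of $\tilde v_n$ together with a standard tubular-neighborhood argument for the $C^{2,\alpha}$ boundary gives $\tilde\psi(\tilde U\cap\R^n_+)\subset\Omega$ and $\tilde\psi(\tilde U\cap\R^n_-)\subset\R^n\setminus\Omclo$, so that setting $\tilde V:=\tilde\psi(\tilde U)$ verifies the conditions of Definition~\ref{DefDiffeomorphism}. For $z\in\tilde U\cap\R^n_0$ the chain rule then gives
\begin{equation*}
\partial_n(u\circ\tilde\psi)(z) \;=\; \nabla u(\tilde\psi(z))\cdot\partial_n\tilde\psi(z) \;=\; \nabla u(\tilde\psi(z))\cdot\tilde v_n(\tilde\psi(z)) \;=\; \partial_{\tilde v_n}u(\tilde\psi(z)),
\end{equation*}
with the derivatives of $u$ understood via their continuous extensions to $\partial\Omega$ (available since $u\in C^{2,\alpha}(\Omclo)$). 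The main technical obstacle is the last shrinking step, namely showing that $z_n>0$ really maps \emph{into} $\Omega$ rather than merely into a thin shell around $\partial\Omega$; this requires uniform transversality of $\tilde v_n$ on a compact piece of $\partial\Omega\cap\tilde V$. A minor point is the unavoidable loss of smoothness from $C^{4,\alpha}$ to $C^{2,\alpha}$, reflecting the regularity of $\tilde v_n$, which is exactly the smoothness claimed in the statement.
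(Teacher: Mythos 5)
Your construction is correct, but it takes a genuinely different route from the paper's, and the comparison is instructive. The paper does not build the chart directly in the ambient space; instead it pulls the reflection direction back through the original $C^{4,\alpha}$ chart, setting $y^{(n)}(z) = (D\psi(z))^{-1}\tilde v_n(\psi(z))$ for $z\in U\cap\R^n_0$, and then composes $\tilde\psi = \psi\circ\psi_0$ with the shear $\psi_0(w',w_n) = w' + w_n\,y^{(n)}(w',0)$ in the flat coordinates. The payoff of that detour is exactly the point you single out as your main technical obstacle: since the $n$-th coordinate of $\psi_0(w)$ equals $w_n\,y^{(n)}_n(w',0)$ with $y^{(n)}_n>0$ (because $\tilde v_n$ is transverse to the tangent space spanned by the first $n-1$ columns of $D\psi$), the shear visibly preserves the sign of the last coordinate, so $\psi_0$ maps $\R^n_\pm$ into $\R^n_\pm$ and $\R^n_0$ into $\R^n_0$; the half-space conditions of Definition~\ref{DefDiffeomorphism} for $\tilde\psi$ are then inherited for free from those of $\psi$, with no tubular-neighborhood argument needed. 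Your direct map $\tilde\psi(z',z_n)=\psi(z',0)+z_n\tilde v_n(\psi(z',0))$ buys an immediate verification of $\partial_n\tilde\psi(z',0)=\tilde v_n$ (the paper instead gets this from the chain rule, $\nabla u\cdot D\psi\, y^{(n)} = \nabla u\cdot\tilde v_n$), but at the cost of having to prove that small positive $z_n$ really lands in $\Omega$, which you only sketch. That step is standard for a $C^{2,\alpha}$ boundary and your uniform-transversality remark is the right way to close it, so I regard your argument as complete in outline; still, if you want to match the paper's economy, the pullback-and-shear formulation is the cleaner packaging of the same idea. One small point to make explicit either way: the equality $\tilde\psi(\tilde U\cap\R^n_+)=\Omega\cap\tilde V$ (not just inclusion) follows because $\tilde\psi$ is a bijection onto $\tilde V$ and the three pieces $\R^n_+$, $\R^n_0$, $\R^n_-$ of $\tilde U$ are sent into the three disjoint sets $\Omega$, $\partial\Omega$, $\R^n\setminus\Omclo$.
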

\begin{proof}
Let $x_0 \in \partial \Om$, $\psi$, $U$, $V$ the $C^{4,\alpha}$-diffeomorphism and the neighborhoods as in Definition \ref{DefDiffeomorphism}. Furthermore let $\tilde v_n : V \to \R^n$ the $C^{2,\alpha}$-smooth vector field provided by Lemma $\ref{ReflDirection}$.
For $x \in V \cap \partial \Om$, $z:= \psi^{-1}(x)$ define
$$ y^{(n)}(z) = (D \psi(z))^{-1} \tilde{v}_n(x). $$
Since $\tilde{v}_n(x)$ points into $\Om$ and is not an element of the tangential space which is spanned by the first $(n-1)$ column vectors of $D \psi(z)$, it holds: $y_n^{(n)}(z) > 0$. Choose now $\varepsilon>0$ sufficiently small such that $U' := [-\varepsilon,\varepsilon]^n \subset U$. 
Define $\psi_0 : U' \to \R^n$ by
$$ (w',w_n) \mapsto w' + w_n y^{(n)}(w',0), $$
then:
$$ D \psi_0 (w',0) = 
\left(\begin{array}{ll}
\begin{array}{l}
 \Id_{(n-1) , (n-1)} \\
 0
\end{array}
& +y^{(n)}(w',0)
\end{array}
\right)
.$$

Since $y_n^{(n)}(z)>0$, $\text{Det} \, D \psi_0(w',0) \ne 0$ and $\psi_0(0)=0$. So there exists a neighborhood of zero $\tilde{U} \subset U'$ such that $\psi_0: \tilde{U} \to \psi_0(\tilde{U})$ is also a $C^{2,\alpha}$-smooth diffeomorphism. Choosing $\tilde{U}$ small enough we get $\psi_0 (\tilde{U}) \subset U$ and $\tilde{V}:= \psi \circ \psi_0 \, (\tilde{U})$ is a neighborhood of $x_0$. Define now $\tilde{\psi}:= \psi \circ \psi_0$, then
$\psi_0 (\tilde{U} \cap \R^n_0) \subset \R^n_0$ and $\psi_0 (\tilde{U} \cap \R^n_+) \subset \R^n_+$, $\psi_0 (\tilde{U} \cap \R^n_-) \subset \R^n_-$ together with the corresponding properties of $\psi$ imply:
$$ \tilde{\psi} (\tilde{U} \cap \R^n_+) = \Om \cap \tilde{V},$$
$$ \tilde{\psi} (\tilde{U} \cap \R^n_0) = \partial \Om \cap \tilde{V}.$$

Now let $z \in \tilde{U} \cap \R^n_0$, then $z = \psi_0(z)$, $x := \psi(z) \in \partial \Om$ and
\begin{align}
\pdn (u \circ \tilde \psi) (z) = \pdn (u \circ \psi \circ \psi_0) (z) = \nabla u(x) (D \psi(z)) y^{(n)}(z) = \nabla u(x) \tilde v_n(x) = \pdnvtilde u(x).
\end{align}

\end{proof}

\begin{theorem} \label{LocalEmbedding}
Let $(L,D(L))$ be a differential operator as in Definition \ref{DefDiffOp} with $C^{2,\alpha}$-smooth  coefficients, $\Om$ a $C^{4,\alpha}$-smooth domain. Then for each $x_0 \in \partial \Om$ there exists a neighborhood $\head{V}$ of $x_0$ and a linear bounded operator $E: D(L) \to C^{2,\alpha}(\head{V})$ such that for $u \in D(L)$ it holds $Eu |_{\head{V} \cap \Om} = u |_{\head{V} \cap \Om}$ and
\begin{align}\sup_{y \in \head{V}} | Eu(y) | = \sup_{y \in \head{V} \cap \Om} \, |u|. \label{ContrLoc} \end{align}

\end{theorem}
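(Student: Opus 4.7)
The plan is to reduce to the one-dimensional squeezed-reflection construction of Theorem \ref{EmbLaplaceHalfspaceDrift} by flattening the boundary locally. Specifically, using the diffeomorphism $\tilde\psi : \tilde U \to \tilde V$ furnished by Theorem \ref{LocDiffeo} (or, equivalently, the flow of the vector field $\tilde v_n$ from Lemma \ref{ReflDirection} extended to a tubular neighborhood of $\partial\Om$), pull $u$ back to $\tilde u := u \circ \tilde\psi$ on $\tilde U \cap \overline{\R^n_+}$, and extend $\tilde u$ to $\{w_n < 0\}$ by
\begin{equation*}
\tilde u^{\mathrm{ext}}(w',w_n) := -\tilde u\bigl(w',F(w',w_n)\bigr), \qquad F(w',w_n) := -w_n + K(w')\,w_n^2.
\end{equation*}
Then push the extension forward via $\tilde\psi^{-1}$ and multiply by a smooth cutoff $\eta$ with $0 \le \eta \le 1$, supported in a small neighborhood $\head V \subset \tilde V$ of $x_0$ and equal to $1$ near $x_0$.

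The steps I would carry out are the following. (i) Derive a boundary relation $\alpha(w')\,\partial_n^2 \tilde u(w',0) + \beta(w')\,\partial_n \tilde u(w',0) = 0$ with $\alpha,\beta \in C^{2,\alpha}$ and $\alpha > 0$, and set $K(w') := \beta(w')/\alpha(w')$. Since $\tilde u \equiv 0$ on $\tilde U \cap \R^n_0$, every purely tangential derivative of $\tilde u$ there vanishes, so pulling the equation $Lu = 0$ on $\partial\Om$ through $\tilde\psi$ leaves only a relation in $\partial_n \tilde u$ and $\partial_n^2 \tilde u$ at $w_n=0$; combining the first-order matching $\partial_n(u\circ\tilde\psi)(z) = \partial_{\tilde v_n} u(\tilde\psi(z))$ from Theorem \ref{LocDiffeo} with Lemma \ref{ReflDirection} identifies $\alpha = \langle \tilde v_n, A\tilde v_n\rangle$ and $\beta$ as $\tilde b$ plus a correction expressible through $v_n$, $\tilde v_n$ and the boundary values of $\partial_n^2\tilde\psi$. (ii) Choose $\varepsilon > 0$ so that $F(w',w_n) > 0$ for $-\varepsilon < w_n < 0$. (iii) Check $C^{2,\alpha}$-matching across $\{w_n=0\}$ exactly as in Theorem \ref{EmbLaplaceHalfspaceDrift}: the identities $F(w',0)=0$, $\partial_{w_n}F(w',0)=-1$, $\partial_{w'_i}F(w',0)=0$, $\partial_{w'_i}\partial_{w_n}F(w',0)=0$ and $\partial_{w_n}^2F(w',0)=2K(w')$, together with the boundary relation of step (i), match all first and second derivatives at $w_n=0$; smoothness away from $w_n=0$ is automatic once $F,\tilde u \in C^{2,\alpha}$. (iv) Define $Eu := \eta\cdot(\tilde u^{\mathrm{ext}}\circ\tilde\psi^{-1})$ on $\head V$; linearity and boundedness of $E$ follow from the chain and product rules.

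The contractive identity \eqref{ContrLoc} is immediate: because $F(w',w_n) > 0$ on $\{-\varepsilon < w_n < 0\}$, every value of $\tilde u^{\mathrm{ext}}$ on the ``outside'' equals $\pm \tilde u$ at some interior point, and $0\le\eta\le 1$ only shrinks magnitudes; combined with $Eu = u$ on $\head V\cap\Om$ and the fact that $\tilde u$ on the relevant preimage is bounded by $\sup_{\head V\cap\Om}|u|$ after shrinking $\head V$, this yields \eqref{ContrLoc}. The main obstacle is step (i), namely showing that $K$ ends up in $C^{2,\alpha}$ rather than merely in $C^{1,\alpha}$: one must trace through the coefficients of the transformed boundary equation and verify that the geometric quantities entering them, in particular $\tilde v_n$, $\tilde b$, and the boundary values of $\partial_n^2 \tilde\psi$, are all $C^{2,\alpha}$ in the tangential variable $w'$. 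This is exactly where the $C^{4,\alpha}$-smoothness of $\partial\Om$ and the $C^{2,\alpha}$-smoothness of the coefficients $A,b$ of $L$ are fully exploited, since only then does the composition $F = -w_n + K(w')w_n^2$ lie in $C^{2,\alpha}$ in the joint variable $(w',w_n)$.
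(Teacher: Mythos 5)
Your proposal follows essentially the same route as the paper's proof: flatten the boundary with the diffeomorphism $\tilde\psi$ of Theorem \ref{LocDiffeo}, perform the squeezed reflection $F(w',w_n)=-w_n+\frac{b'}{a'}(w',0)\,w_n^2$ with $a'=\tilde a_{nn}\circ\tilde\psi$, $b'=\tilde b\circ\tilde\psi$ coming from the boundary relation of Lemma \ref{ReflDirection}, verify the $C^{2,\alpha}$-matching of first and second derivatives across $\{w_n=0\}$ exactly as in Theorem \ref{EmbLaplaceHalfspaceDrift}, and push forward. The only cosmetic difference is that you insert the cutoff $\eta$ already at the local stage, whereas the paper defers all cutoffs to the partition of unity in Theorem \ref{EmbeddingWholeSpace}; this changes nothing essential.
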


\begin{proof}
Let $x_0 \in \partial \Om$, $\tilde{V}$, $\tilde{U}$ be the neighborhoods and $\tilde{\psi}$ the diffeomorphism provided by Theorem \ref{LocDiffeo}. 
Define $a' := \tilde a_{nn} \circ \tilde \psi$, $b' := \tilde{b} \circ \tilde \psi$ with $\tilde{a}_{nn}$, $\tilde{b}$ are as in \eqref{ReflDirectionEq}.
Choose $\varepsilon>0$ such that $[-\varepsilon,\varepsilon]^n \subset \tilde{U}$.
Define: $$F : [-\varepsilon,\varepsilon]^n  \cap \{ z_n < 0 \} \to \R^n \, ,$$ 
$$ z \mapsto \left(z', - z_n + \frac{b'(z',0)}{a'(z',0)} z^2_n \right). $$

For the derivatives of $F$ we have:
\begin{eqnarray} \pdn F_n(z) = \left(-1 + 2 z_n \frac{b'(z',0)}{a'(z',0)}\right), \nonumber \\
\pdn \pdn F_n(z) = 2 \frac{b'(z',0)}{a'(z',0)}. \end{eqnarray}
Note that $\pdn F_n(z) \to -1$ as $z_n \to 0$, furthermore for $i,j \ne n$,  $\pdi F_n(z) \to 0$, $\pdj \pdi F_n(z) \to 0$ and $\pdn \pdi F_n(z) \to 0$ as $z_n \to 0$.
\\
% In order to ensure that $F$ maps points with $z_n \le 0$ to $U \cap \{ z_n \ge 0 \}$ we have to make $U$ smaller.
% Set $\tilde{U} := (U \cap \{ z_n \ge 0 \}) \cup F^{-1}(U \cap \{ z_n \le 0 \})$. Since $F$ is continuous and $F(z',0) = z'$, $\tilde{U}$ is open and for $z \in \tilde{U} \cap \R^-_n$, $F(z) \in \tilde{U}$. Replace $U$ now by $\tilde{U}$. 
Moreover the following estimate holds:
\begin{align*}
- z_n - \frac{\coeffbound}{\ellip} z^2_n \le F_n(z',z_n) \le -z_n + \frac{\coeffbound}{\ellip} z^2_n,
\end{align*}
where $\coeffbound:= \sup_{z \in \tilde{U}} |b'(z',0)|$ and $\ellip$ is the ellipticity constant mentioned after Definition \ref{DefDiffOp}.
Choosing $\varepsilon_1>0$ small enough we therefore get:
$$ F(z',z_n) \in (-\varepsilon,\varepsilon)^{(n-1)} \times (0,\varepsilon) \ \text{for} \ (z',z_n) \in (-\varepsilon,\varepsilon)^{(n-1)} \times (- \varepsilon_1,0). $$
Define $\head{U}:= (-\varepsilon,\varepsilon)^{(n-1)} \times (-\varepsilon_1,\varepsilon)$ and
% In DefDiffOp ellipticität und schranken koeff
$E_0: D(L) \to C^{2,\alpha}(\head{U})$ by:
$$
E_0u(z)=
\begin{cases}
- u (\tilde \psi \circ F(z) ) &  ,\tif \, -\varepsilon_3 < z_n <0 \\
u \big(\tilde \psi(z) \big) & ,\telse 
\end{cases}.
$$
\\
We check the smoothness of the extended function. Note that by the smoothness of $a'$ and $b'$, $F$ is a $C^{2,\alpha}$-smooth function.
\\
For points with $z_n<0$ $E_0 u$ is a composition of $u$ and the $C^{2,\alpha}$-smooth function $\tilde \psi \circ F$.
\\
Now let $z^{(0)} \in U$ with $z^{(0)}_n=0$, $y_0 := \tilde \psi(z^{(0)})$. We write $z \nearrow z^{(0)}$ for $z \to z^{(0)}$ and $z_n < 0$.
Then $\partial_{i} (u \circ \tilde \psi) (z_0)=0$, where $\partial_{i} (u \circ \tilde \psi) (z_0)$ denotes the continuous extension of the inner derivative to the boundary point $z_0$. We use the same notation for higher order derivatives below. Then $\partial_{i} (u \circ \tilde \psi \circ F)(z',z_n) \to 0$ as $z \nearrow z^{(0)}$ for $i \ne n$, so $\partial_i E_0 u$ exists in $z^{(0)}$.
By the same arguments the second partial derivatives in direction $i,j \ne n$ exist.
Moreover
\begin{align*}
\pdn (- u \circ \tilde \psi \circ F) (z) = - \pdn (u \circ \tilde \psi)(F(z)) \pdn F_n(z) \to \pdn(u \circ \tilde \psi)(z^{(0)}) \, \text{as} \, z \nearrow z^{(0)}.
\end{align*}
% Here $\pdn(u \circ \tilde \psi)(z^{(0)})$ is the continuous extension of $\pdn(u \circ \tilde \psi)$ in $\head{U} \cap \{ z_n > 0 \}$ to $\head{U} \cap \{ z_n = 0 \}$. 
By the same argument together with $\partial_{n} \partial_{i} F_n(z) \to 0$ as $z \nearrow z^{(0)}$ we get $\pdn \partial_{i} (- u \circ \tilde \psi \circ F)(z) \to \pdn \partial_{i} (u \circ \tilde \psi \circ F)(z^{(0)})$.
\\
For the second derivative in direction $n$ we have
\begin{multline*}
\pdn (- \pdn u \circ \tilde \psi \circ F)(z) = \pdn (- \pdn (u \circ \tilde \psi)(F(z)) \pdn F_n(z)) = \\
- \pdn^2 (u \circ \tilde \psi)(F(z))(\pdn F_n(z))^2 - \pdn (u \circ \tilde \psi)(F(z)) (\pdn^2 F_n(z)).  
\end{multline*}

By \eqref{ReflDirectionEq} it holds
$$ a' \pdnvtilde \pdnvtilde u (y_0) + b' \pdnvtilde u (y_0)= 0. $$
Moreover by the choice of $\tilde \psi$ it holds
$$\pdn (u \circ \tilde \psi)(z^{(0)}) = \pdnvtilde u(\tilde \psi(z^{(0)}))$$ 
and 
$$\pdn^2  (u \circ \tilde \psi)(z^{(0)}) = \pdn ((\pdnvtilde u) \circ \tilde \psi) (z^{(0)}) = \pdnvtilde \pdnvtilde u(\tilde \psi(z^{(0)})).$$
So it follows $a' \pdn^2(u \circ \tilde \psi)(z^{(0)}) + b' \pdn (u \circ \tilde \psi) (z^{(0)})= 0$.

So for $z \nearrow z^{(0)}$:
\begin{multline*}
\pdn (\pdn u \circ \tilde \psi \circ F)(z) \to - \pdn^2 (u \circ \tilde \psi)(F(z^{(0)}))(\pdn F_n(z^{(0)}))^2 - \pdn (u \circ \tilde \psi)(F(z^{(0)})) (\pdn^2 F_n(z^{(0)})) = \\ - \pdn^2 (u \circ \tilde \psi)(F(z^{(0)})) - \pdn (u \circ \tilde \psi)(F(z^{(0)})) 2 \frac{b'}{a'}(z^{(0)}) = \pdn^2 (u \circ \tilde \psi)(z^{(0)}). 
\end{multline*}

So the extensions of the one-sided first and second order derivatives from below $(z_n \le 0)$ and above $(z_n \ge 0)$ coincide.
Since the second order derivatives are Hölder continuous in both parts and continuous at the points with $z_n=0$ there are Hölder continuous in $\head{U}$.
Set $\head{V}:= \tilde \psi(\head{U})$. Then $\head{V}$ is a neighborhood of $x_0$. Define $E: C^{2,\alpha}(\head{V} \cap \Om) \to C^{2,\alpha}(\head{V})$ by:
$$ 
E u(x) = \begin{cases} E_0 u \circ \tilde \psi^{-1}(x) &, \tif \tilde \psi_n^{-1}(x) < 0 \\
u(x) &, \telse
\end{cases}.
$$
This operator fulfills the conditions \eqref{ContrLoc}, and is also bounded w.r.t to the $C^{2,\alpha}$ norm since the $C^{2,\alpha}$ norm of the extended function $Eu$ can be estimated by the $C^{2,\alpha}$ norms of $u$, $\tilde \psi$ and the coefficients $a'$, $b'$. 
\end{proof}
\begin{remark}
Note that for the $C^{2,\alpha}$-smoothness of the diffeomorphism $\tilde{\psi}$ in Theorem  \ref{LocDiffeo} we need that the transformed differential operator has $C^{2,\alpha}$-smooth second order coefficients. For the $C^{2,\alpha}$-smoothness of the $F$ in Theorem \ref{LocalEmbedding} also the first order coefficients must be $C^{2,\alpha}$-smooth. Since the first order coefficients contain second derivatives of the diffeomorphism $\psi$, $\Om$ must be assumed to be $C^{4,\alpha}$-smooth.
\end{remark}

We have therefore established the main tool for constructing the embedding operator on the whole domain.

\begin{theorem} \label{EmbeddingWholeSpace}
Let $(L,D(L))$ as in Definition \ref{DefDiffOp}, $\Om$ a bounded $C^{4,\alpha}$ smooth domain. Then there exists a linear bounded operator $E: D(L) \to C_c^{2,\alpha}(\R^n)$ with $Eu |_{\Om} = u$ and 
\begin{align}
\sup_{y \in \R^n} | Eu(y) | = \sup_{y \in \Om} \, |u|.
\label{ContrWh}
\end{align}

\end{theorem}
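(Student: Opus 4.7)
The plan is to glue the local extensions produced by Theorem \ref{LocalEmbedding} via a smooth partition of unity. The key observation is that the sup-norm contractivity \eqref{ContrWh} is preserved under convex combinations---a partition-of-unity sum with nonnegative weights $\chi_i$ satisfying $\sum_i \chi_i \le 1$ never inflates a pointwise bound---so this gluing mechanism is compatible with \eqref{ContrWh} in a way that, say, a naive sum of the local extensions would not be.

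First I would invoke compactness of $\partial\Om$ to select finitely many boundary points $x_1,\dots,x_N$ whose associated neighborhoods $\head V_1,\dots,\head V_N$ and local extension operators $E_i:D(L)\to C^{2,\alpha}(\head V_i)$ from Theorem \ref{LocalEmbedding} cover $\partial\Om$. Then I would add one interior open set $\head V_0$ with $\overline{\head V_0}\subset\Om$ chosen so that $\{\head V_i\}_{i=0}^N$ covers $\Omclo$; by shrinking if needed I arrange $\bigcup_i\head V_i$ to be bounded. A standard smooth partition of unity $\{\chi_i\}_{i=0}^N$ subordinate to this cover provides $\chi_i\in C^\infty_c(\head V_i)$, $\chi_i\ge 0$, with $\sum_i\chi_i\equiv 1$ on a neighborhood of $\Omclo$ and $\sum_i\chi_i\le 1$ everywhere.

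Second I would define
\begin{equation*}
Eu := \chi_0 u + \sum_{i=1}^N \chi_i\,E_i u,
\end{equation*}
each summand extended by zero outside its support (unambiguous since $\supp\chi_0\subset\Om$, so $\chi_0 u$ is well-defined, and $\supp\chi_i\subset\head V_i$ for $i\ge 1$). Linearity is immediate; since each factor $\chi_i$ is smooth and compactly supported and each $E_iu$ lies in $C^{2,\alpha}(\head V_i)$, the result lies in $C^{2,\alpha}_c(\R^n)$, and $C^{2,\alpha}$-boundedness of $E$ is inherited from the boundedness of the $E_i$ asserted in Theorem \ref{LocalEmbedding}. For the agreement with $u$ on $\Om$: each $E_iu$ equals $u$ on $\head V_i\cap\Omclo$, so $Eu=(\sum_i\chi_i)u=u$ on $\Omclo$, which gives both the extension property and the lower bound $\sup_{\R^n}|Eu|\ge\sup_\Om|u|$. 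For the reverse bound, the local contractivity \eqref{ContrLoc} yields $|E_iu(x)|\le\sup_\Om|u|$ whenever $\chi_i(x)\ne 0$, hence
\begin{equation*}
|Eu(x)| \;\le\; \sum_{i=0}^N \chi_i(x)\,\sup_\Om|u| \;\le\; \sup_\Om|u|,
\end{equation*}
establishing \eqref{ContrWh}. Compact support of $Eu$ is automatic since all $\chi_i$ vanish outside the bounded set $\bigcup_i\head V_i$.

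I do not expect any serious obstacle: the analytic work has been done in Theorem \ref{LocalEmbedding}, and what remains is partition-of-unity bookkeeping. The only subtlety worth emphasising is the inclusion of the interior patch $\head V_0$, on which $u$ itself plays the role of the ``local extension''; without it one would lose the identity $Eu=u$ on all of $\Omclo$, and hence the sharp equality in \eqref{ContrWh}.
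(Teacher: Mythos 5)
Your proposal is correct and follows essentially the same route as the paper: a finite cover of $\partial\Om$ by the neighborhoods from Theorem \ref{LocalEmbedding}, an interior patch on which $u$ itself serves as the extension, and a partition of unity with $\sum_i\chi_i=1$ on $\Omclo$ and $\sum_i\chi_i\le 1$ elsewhere, so that the convex-combination argument yields \eqref{ContrWh}. The only differences are cosmetic (the paper takes $\head V_0:=\Om\setminus\bigcup_i\head V_i$ rather than an open interior set, but the gluing and the contractivity argument are identical).
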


\begin{proof}
For $x \in \partial \Om$ let $\head{V}_x$ be the neighborhood provided by Theorem \ref{LocalEmbedding}. Then $\partial \Om \subset \bigcup_{x \in \partial \Om} \head V_x$. Since $\partial \Om$ is compact, there exist finitely many $x_i \in \partial \Om$, $1 \le i \le M$ such that $\head{V}_1,...,\head{V}_M$ cover $\partial \Om$. Denote by $(E_i,\head V_i)$ the corresponding embedding operator and neighborhood. Define $\head V_0 := \Om \setminus (\bigcup_i \head V_i)$, then by the choice of $V_i$,  $\dist(\head{V}_0,\partial \Om)>0$. Choose now a partition of unity $(\eta_i)^M_{i=0}$ such that $\eta_i$ has compact support in $\head V_i$ for $1 \le i \le M$ and $\sum_{i=0}^M \eta_i(x) = 1$ for $x \in \Omclo$. Define $E: D(L) \to C_c^{2,\alpha}(\R^n)$ by
$$ Eu := \sum_{i=0}^M \eta_i E_i(u|_{\head V_i}). $$

By the properties of $E_i$ and $\eta_i$ this defines a function in $C^{2,\alpha}_c(\R^n)$. Since $\sum_{i=0}^M \eta_i(x) = 1$ for $x \in \Omclo$, we have $Eu(x)=u(x)$ for $x \in \Om$. Since $\sum_{i=0}^M \eta_i(x) \le 1$ for $x \in \Om^c$ and $\sup_{y \in V_i} | E_i u(y) | = \sup_{y \in \head V_i \cap \Om} | u(y) |$ equality $\eqref{ContrWh}$ follows.
Furthermore the operator is also bounded (but not necessarily contractive) w.r.t the $C^{2,\alpha}$ norm. This follows from the fact that the operators $E_i$ are bounded w.r.t the $C^{2,\alpha}$ norm.
\end{proof}

{\bf Acknowledgements:} 
We dedicate this article to A. Skorokhod, I. Kovalenko and V. Korolyuk. The authors would like to thank the organizing and programme committee of the MSTAII conference, especially Yuri Kondratiev, for the opportunity to give a talk on this topic at the conference.
Furthermore we would like to thank Heinrich v. Weizsäcker and Oleg G. Smolyanov for fruitful discussions and ideas. The stay of the authors at the conference was financially supported by the DFG through project GR-1809/9-1.

\textbf{Statement:} This manuscript has not been published or submitted for publication elsewhere.

\end{document}